\let\save@mathaccent\mathaccent
\newcommand*\if@single[3]{%
  \setbox0\hbox{${\mathaccent"0362{#1}}^H$}%
  \setbox2\hbox{${\mathaccent"0362{\kern0pt#1}}^H$}%
  \ifdim\ht0=\ht2 #3\else #2\fi
  }
\newcommand*\rel@kern[1]{\kern#1\dimexpr\macc@kerna}
\newcommand*\widebar[1]{\@ifnextchar^{{\wide@bar{#1}{0}}}{\wide@bar{#1}{1}}}
\newcommand*\wide@bar[2]{\if@single{#1}{\wide@bar@{#1}{#2}{1}}{\wide@bar@{#1}{#2}{2}}}
\newcommand*\wide@bar@[3]{%
  \begingroup
  \def\mathaccent##1##2{%
    \let\mathaccent\save@mathaccent
    \if#32 \let\macc@nucleus\first@char \fi
    \setbox\z@\hbox{$\macc@style{\macc@nucleus}_{}$}%
    \setbox\tw@\hbox{$\macc@style{\macc@nucleus}{}_{}$}%
    \dimen@\wd\tw@
    \advance\dimen@-\wd\z@
    \divide\dimen@ 3
    \@tempdima\wd\tw@
    \advance\@tempdima-\scriptspace
    \divide\@tempdima 10
    \advance\dimen@-\@tempdima
    \ifdim\dimen@>\z@ \dimen@0pt\fi
    \rel@kern{0.6}\kern-\dimen@
    \if#31
      \overline{\rel@kern{-0.6}\kern\dimen@\macc@nucleus\rel@kern{0.4}\kern\dimen@}%
      \advance\dimen@0.4\dimexpr\macc@kerna
      \let\final@kern#2%
      \ifdim\dimen@<\z@ \let\final@kern1\fi
      \if\final@kern1 \kern-\dimen@\fi
    \else
      \overline{\rel@kern{-0.6}\kern\dimen@#1}%
    \fi
  }%
  \macc@depth\@ne
  \let\math@bgroup\@empty \let\math@egroup\macc@set@skewchar
  \mathsurround\z@ \frozen@everymath{\mathgroup\macc@group\relax}%
  \macc@set@skewchar\relax
  \let\mathaccentV\macc@nested@a
  \if#31
    \macc@nested@a\relax111{#1}%
  \else
    \def\gobble@till@marker##1\endmarker{}%
    \futurelet\first@char\gobble@till@marker#1\endmarker
    \ifcat\noexpand\first@char A\else
      \def\first@char{}%
    \fi
    \macc@nested@a\relax111{\first@char}%
  \fi
  \endgroup
}
\newcommand{\R}{\mathbb{R}}
\newcommand{\rn}{\R^n}
\newcommand{\N}{\mathbb{N}}
\newcommand{\M}{\mathscr{M}}
\newcommand{\Mpl}{\M^+}
\newcommand{\F}{\mathcal{F}}
\newcommand{\I}{\mathscr{S}}
\DeclareMathOperator{\Log}{Log}
\newcommand{\optTar}[1]{Y_{#1}}
\newcommand{\VmXOm}{V^m_0X(\Omega)}
\newcommand{\VmLamOm}{V^m_0\Lambda_w^q(\Omega)}
\newcommand{\fundX}[1][X]{\varphi_{#1}}
\newcommand{\fundOptX}[1][X]{\varphi_{\optTar{#1}}}
\newcommand{\fundOptXAsoc}[1][X]{\varphi_{\optTar{#1}'}}
\newcommand{\RR}{R}
\newcommand*\dd{\mathop{}\!\mathrm{d}}
\DeclareMathOperator*{\esssup}{ess\,sup}
\numberwithin{equation}{section}
\theoremstyle{plain}
\newtheorem{theorem}{Theorem}[section]
\newtheorem{proposition}{Proposition}[section]
\theoremstyle{definition}
\newtheorem{definition}{Definition}[section]
\newtheorem{remark}{Remark}[section]
\let\c@corollary=\c@theorem
\let\c@proposition=\c@theorem
\let\c@remark=\c@theorem
\let\c@definition=\c@theorem
\newcommand{\TITLE}{Non-strict Singularity of Optimal Sobolev Embeddings}
\newcommand{\AUTHORS}{Jan Lang and Zdeněk Mihula}
\title{\TITLE}
\author{\AUTHORS}
\address{Jan Lang, Department of Mathematics, The Ohio State University, 231 West 18th Avenue, Columbus, OH 43210-1174, USA; Czech Technical University in Prague, Faculty of Electrical Engineering, Department of Mathematics, Technick\'a~2, 166~27 Praha~6, Czech Republic}
\email{lang.162@osu.edu}
\urladdr{\href{https://orcid.org/0000-0003-1582-7273}{0000-0003-1582-7273}}
\address{Zden\v ek Mihula, Czech Technical University in Prague, Faculty of Electrical Engineering, Department of Mathematics, Technick\'a~2, 166~27 Praha~6, Czech Republic}
\email{mihulzde@fel.cvut.cz}
\urladdr{\href{https://orcid.org/0000-0001-6962-7635}{0000-0001-6962-7635}}
\begin{document}
\setcitestyle{numbers}
\bibliographystyle{plainnat}

\subjclass[2020]{46E35, 46E30, 47B60} 
\keywords{strictly singular operators, Sobolev spaces, compactness, optimal spaces, rearrangement-invariant spaces, Lambda spaces}
\thanks{This research was partly supported by grant no.~23-04720S of the Czech Science Foundation.}

\begin{abstract}
We investigate the operator-theoretic property of strict singularity for optimal Sobolev embeddings within the general framework of rearrangement-invariant function spaces (r.i.\ spaces). 

More specifically, we focus on studying the ``quality'' of non-compactness for optimal Sobolev embeddings 
	$V^m_0X(\Omega)\to Y_X(\Omega)$, where $X$ is a given r.i.\ space and $Y_X$  is the corresponding optimal target r.i.\ space (i.e., the smallest among all r.i.\ spaces).
	 
For the class of sub-limiting norms (i.e., the norms whose fundamental function satisfies $\varphi_{Y_X}(t)\approx t^{-m/n}\varphi_X(t)$ as $t\to0^+$), we construct suitable spike-function sequences that establish a general framework for proving non-strict singularity of optimal (and thus non-compact) sublimiting Sobolev embeddings.

As an application, we show that optimal sublimiting Sobolev embeddings are not strictly singular in a rather large subclass of r.i.\ spaces, namely weighted Lambda spaces $X=\Lambda^q_w$, $q\in[1, \infty)$. Except for the endpoint case $X=L^{n/m,1}$, our spike-function construction enables us to construct a subspace of $V^m_0X$ that is isomorphic to $\ell_q$, which we then leverage to prove the non-strict singularity of the corresponding optimal Sobolev embedding.
\end{abstract}

\maketitle

\section{Introduction}
Although particular cases were already known at the beginning of the 20th century, the systematic study of Sobolev embeddings date back to the 1930s and the pioneering work of Sergei Sobolev \cite{Sob38}, who established that weak derivatives control integral norms of functions and thereby guarantee their certain continuity and integrability properties. Subsequent refinements include those by Gagliardo~\cite{G:58} and Nirenberg~\cite{N:59} in the 1950s, who independently introduced important interpolation inequalities and extended the Sobolev embedding established by Sobolev for $p>1$ to the endpoint case $p=1$. A classical form of these Sobolev embeddings, for a bounded domain $\Omega\subseteq\rn$, $1\leq m < n$, and $1\leq p<n/m$, reads as:
\begin{equation}\label{intro:classical_Sob_emb}
W^{m,p}_0(\Omega) = W^m_0 L^p(\Omega) \hookrightarrow L^{p^*}(\Omega),
\quad\frac1{p^*}=\frac1p-\frac m n.
\end{equation}
The so-called Sobolev exponent $p^*$ is optimal in the sense that it cannot be replaced by any bigger exponent $q>p^*$, and the embedding \eqref{intro:classical_Sob_emb} is not compact (see \cite{AFbook}).

Already in the 1960s--1970s, it was apparent that the Lebesgue scale of function spaces is not sufficient for capturing various subtle limiting or borderline behaviors, and more general, finer-grained scales of function spaces, such as Lorentz or Orlicz spaces, became widely used (e.g., see~\cite{BW:80,D:71,H:79,P:65,S:81,S:72b,T:67,Y:61}). A large number of such function spaces are particular instances of so-called rearrangement-invariant function spaces (r.i.\ spaces, see~\cite{BS}). Among others, landmark contributions by Maz'ya and Talenti elucidated fine properties of solutions to elliptic PDEs via rearrangements and capacitary estimates (see~\cite{M:73, Mabook, T:76, T:76b}), which fortified the importance of studying the structure of general Sobolev-type embeddings and various symmetrization techniques (see also~\cite{B:19} and references therein).

More recently, a great deal of effort has been devoted to describing optimal spaces in various Sobolev embeddings in the general framework of r.i.\ spaces. In particular, Kerman and Pick in \cite{KP:06} (see also~\cite{EKP:00}) completely characterized optimal target r.i.\ spaces in Sobolev embeddings on bounded regular domains in~$\rn$. An equivalent form of the Sobolev embeddings in question is
\begin{equation}\label{intro:general_Sob_emb_in_ri_sps}
V_0^m X(\Omega) \hookrightarrow Y(\Omega),
\end{equation}
where both $X$ and $Y$ are r.i.\ spaces, $\Omega\subseteq\rn$ is a bounded domain in $\rn$, and $V_0^m X(\Omega)$ is a homogeneous $m$th-order Sobolev space built upon $X$. They completely characterized the optimal target r.i.\ space $Y(\Omega)$ in \eqref{intro:general_Sob_emb_in_ri_sps}. We will denote this optimal space by $Y_X$. The optimality is meant in the sense that $Y_X$ the smallest r.i.\ space $Y$ with which \eqref{intro:general_Sob_emb_in_ri_sps} is valid for a given r.i.\ space $X$. Note that \eqref{intro:classical_Sob_emb} is a particular case of \eqref{intro:general_Sob_emb_in_ri_sps} with $Y(\Omega)=L^{p^*}(\Omega)$ and $W^{m,p}_0(\Omega) = V_0^m L^p(\Omega)$, up to equivalent norms (see Subsection~\ref{subsec:Sob_spaces} for more information and precise definitions), and the optimal target r.i.\ space $Y_X(\Omega)$ in \eqref{intro:classical_Sob_emb} is the Lorentz space $L^{p^*,p}(\Omega)\subsetneq L^{p^*}(\Omega)$ (see~\cite{On:63, P:66, T:98}). Furthermore, in the subsequent paper \cite{KP:08}, they also completely characterized the compactness of \eqref{intro:general_Sob_emb_in_ri_sps}. In particular, it turns out that the optimal Sobolev embedding $V_0^m X(\Omega) \hookrightarrow Y_X(\Omega)$ is never compact (see also~\cite{CM:19,CM:22,S:12,S:15}). The interested reader is referred to \cite{BC:21, CPS:15, CPS:20} and references therein for more information on optimal Sobolev embeddings in various settings.

Despite the extensive work on the quality of compactness for the classical Sobolev spaces, where the quality is usually studied via the speed of decay for different $s$-numbers or entropy numbers (see~\cite{ET:96, P:85} and references therein), there was, until recently, only a limited amount of work devoted to the study of the ``quality'' of non-compactness of non-compact Sobolev-type embeddings. For non-compact Sobolev embeddings, it was observed that standard measures of non-compactness (related to Kolmogorov $s$-numbers or entropy numbers) are not satisfactory (see \cite{CS:90,LMOP:21, LMP:24FirstView}) as they often fail to distinguish between different target spaces. For example, from this point of view, the ``quality'' of non-compactness of the optimal Sobolev embedding $V_0^{m,p}(\Omega) \hookrightarrow L^{p^*,p}(\Omega)$ is the same as that of the non-compact, non-optimal Sobolev embeddings $V_0^{m,p}(\Omega) \hookrightarrow L^{p^*,q}(\Omega) \supsetneq L^{p^*,p}(\Omega)$ for every $q\in(p,\infty]$. This motivates the use of different operator-theoretic concepts, such as \emph{strict singularity} or \emph{finite strict singularity} (see~Subsection~\ref{4.3}), which form the main focus of this paper.

  Compact embeddings, which are often approximable by finite-rank operators, are always finitely strictly singular and then also strictly singular; however, optimal Sobolev embeddings into r.i.\ spaces are never compact. Moreover, it was showed that the optimal Sobolev embedding $V^m_0 L^p(\Omega)\hookrightarrow L^{p^*,p}(\Omega)$ is not strictly singular (see \cite{LM:23})\textemdash unlike the non-optimal Sobolev embedding \eqref{intro:classical_Sob_emb}, which is finitely strictly singular, despite not being compact. The non-strict singularity of the optimal Sobolev embedding was later proved also in more general settings (see~\cite{CLY:25preprint,GLM:25}). To the best of our knowledge, the only known exception to this phenomenon is the optimal limiting Sobolev embedding $V_0^m L^{n/m,1}(\Omega)\hookrightarrow \mathcal C_b(\Omega)$ into the space of bounded continuous functions (see~\cite{S:81}), which is not only strictly singular but also finitely strictly singular (see \cite{LM:19}), despite being non-compact.
	
	This raises several natural questions: under what conditions on the underlying norms do non-compact optimal Sobolev embeddings become non-strictly singular?  Moreover, is this phenomenon pervasive across the broader landscape of r.i.\ spaces?
	
To date, in all known examples, proofs and techniques used to establish that optimal non-compact Sobolev embeddings fail to be strictly singular rest on two key ingredients. They together ensure that it is rather simple to find a suitable infinite-dimensional subspace of the Sobolev-type space in question that attests the non-strict singularity of the corresponding optimal Sobolev embedding. First, they involve classical $L^p$ or $\ell_p$ structures (e.g., $V_0^{m,p}$ and $L^{p^*,p}$, $V_0^m L^{p, q}$ and $L^{p^*,q}$, or $B^s_{p,q}$ spaces, see~\cite{T:83}). Second, they exhibit a certain dilation invariance. For example, given a function $u\in V_0^{m,p}(\Omega)$, the transformation $u \mapsto u_\kappa \in V_0^{m,p}(\Omega)$, $\kappa\geq1$, where $u_\kappa(x) = \kappa^{n/p-m}u(\kappa x)$, produces a family of functions that concentrate (i.e., their supports vanish) as $\kappa\to\infty$, but both $\|u_\kappa\|_{V_0^{m,p}(\Omega)} = \|u\|_{V_0^{m,p}(\Omega)}$ and $\|u_\kappa\|_{L^{p^*,p}(\Omega)} = \|u\|_{L^{p^*,p}(\Omega)}$ remain unchanged. However, such an invariance is rarely at our disposal when one ventures beyond these rather simple, well-behaved cases into the realm of general r.i.\ spaces.

  In this paper, we develop a general framework for addressing these questions, extending the theory well beyond the $L^p$ setting.
After recalling the necessary background on r.i.\ spaces, optimal Sobolev embeddings, and strictly singular operators in Section \ref{Preliminaries}, we introduce in Section \ref{OptimalSublim} the class of \emph{sublimiting} r.i.~norms (\cref{def:sublimiting}, see also~\cref{prop:sublimiting_X_neces_conds}). These norms are characterized by the following relation between their fundamental function $\varphi_X$ (i.e., the norm of a characteristic function of a set with prescribed measure) and the fundamental function $\varphi_{Y_X}$ of the optimal norm in the corresponding Sobolev embedding:
\begin{equation*}
\varphi_{Y_X}(t)\approx t^{-m/n}\varphi_X(t)\quad(t\to0^+).
\end{equation*}
For such norms, we prove a key test-function construction (\cref{thm:small_support_big_norm}) showing that given an optimal sublimiting Sobolev embedding $V_0^mX(\Omega) \to Y_X(\Omega)$, one can find sequences of radially concentrated “spike” functions whose norms under the Sobolev embedding do not collapse. Notably, this construction bypasses the fact that we do not have any dilation invariance at our disposal for a general optimal embedding $V_0^mX(\Omega) \to Y_X(\Omega)$. Moreover, the optimal target space $Y_X$ can be replaced by any r.i.\ space $Y$ having the same fundamental function as $Y_X$.

As our main application (Section \ref{Section4}), we consider the family of classical Lorentz spaces of type Lambda (Lambda spaces, for short)
$\Lambda^q_w$, $q\in[1, \infty)$, defined by
\begin{equation*}
\|f\|_{\Lambda^q_w}
=\Biggl( \int_0^1 f^*(t)^q w(t) \dd{t} \Biggr)^\frac1{q},
\end{equation*}
where $w$ is a suitable weight. This class of function spaces encompasses a large number of r.i.\ spaces, such as Lebesgue spaces, Lorentz spaces, some Orlicz spaces, or Lorentz--Zygmund spaces (see~Subsection~\ref{subsec:riSps}).  We first show that these norms are sublimiting under natural conditions on the weight $w$ (see~\cref{prop:when_is_optimal_Lambda_nonlimiting}). Then we show that the corresponding optimal Sobolev embedding
\begin{equation}\label{intro:optimal_Sobolev_embedding_Lambda}
\VmLamOm = V_0^m X(\Omega) \hookrightarrow Y_X(\Omega) = \Lambda^q_{w_{opt}}(\Omega)
\end{equation}
fails to be strictly singular (\cref{thm:optimal_embedding_Lambda_spaces_not_SS}), where $w_{opt}$ is an optimal weight (see the proof of \cref{prop:optimal_for_Lambda_is_Lambda} for its definition). To achieve this, we leverage our key test-function construction and exploit the known fact that Lambda spaces with exponent $q$ contain an order isomorphic copy of $\ell_q$ (see~\cite[Theorem~1]{KM:04}). It should be noted that this known fact still has to be substantially modified and refined to ensure the synergy and coherence of arguments, leading to the desired result. Ultimately, we are able to obtain a rather explicit infinite-dimensional subspace of $\VmLamOm$, isomorphic to $\ell_q$, on which the Sobolev embedding \eqref{intro:optimal_Sobolev_embedding_Lambda} is an isomorphism onto its image. This shows that the phenomenon observed before only in very special cases of optimal Sobolev embeddings (by using their particular special forms and structures) is prevalent in a large number of general optimal Sobolev embeddings, which can be described as~\eqref{intro:optimal_Sobolev_embedding_Lambda}.

\section{Preliminaries} \label{Preliminaries}

Throughout the entire paper, we assume that $\Omega\subseteq\rn$, $n\geq2$, is a bounded domain such that $|\Omega| = 1$. We also use the convention that $0\cdot\infty=1/\infty=0$. We will often write $A\lesssim B$, where $A$ and $B$ are nonnegative expressions, if there is a positive constant $c$ such that $A\leq c\cdot B$. The constant $c$ may depend on some parameters in the expression. If not stated explicitly,  what the constant may depend on and what it may not should be obvious from the context. We also write $A\gtrsim B$ with the obvious meaning, and $A\approx B$ when $A\lesssim B$ and $A\gtrsim B$ simultaneously.

\subsection{Rearrangement-invariant function spaces}\label{subsec:riSps}

In this subsection, $(\RR,\mu)$ is a nonatomic measure space with $\mu(\RR) = 1$. In the rest of this paper, $(\RR,\mu)$ will be either $\Omega$ or the interval $(0,1)$ (both equipped with the Lebesgue measure). The set of all $\mu$-measurable functions on $(\RR, \mu)$ is denoted by $\M(\RR, \mu)$. We also denote by $\Mpl(\RR, \mu)$ the set of all those functions from $\M(\RR, \mu)$ that are nonnegative $\mu$-a.e.\ in $\RR$. 

The \textit{nonincreasing rearrangement} $f^* \colon  (0,\infty) \to [0, \infty ]$ of a function $f\in \M(\RR, \mu)$ is
defined as
\begin{equation*}
f^*(t)=\inf\{\lambda\in(0,\infty) : \mu(\{x\in\RR: |f(x)| > \lambda\})\leq t\},\ t\in(0,\infty).
\end{equation*}
The operation $f\mapsto f^*$ is not subadditive in general, but we have
\begin{equation}\label{prel:pointwise_inequality_sum_of_rearrangements}
(f+g)^*(t) \leq f^*(t/2) + g^*(t/2) \quad \text{for every $t\in(0, \infty)$}
\end{equation}
and all $f,g\in\M(\RR, \mu)$, for which the sum $f+g$ is well defined. We say that two functions $f\in \M(\RR,\mu)$ and $g\in \M(S,\nu)$, where $(S,\nu)$ is a possibly different measure space, are \emph{equimeasurable} if $f^* = g^*$. In particular, $f$ and $f^*$ are equimeasurable. The function $f^*$ is nonincreasing and right continuous. We have $\esssup_{x\in\RR}|f(x)| = f^*(0^+)$ and $f^*(t) = 0$ for every $t\geq \mu(\{x\in\RR: f(x) \neq 0\})$. If $f=\chi_E$ for some $\mu$-measurable $E\subseteq\RR$, then $f^* = \chi_{(0, \mu(E))}$. If $|f|\leq |g|$ $\mu$-a.e.\ in $\RR$, then $f^*\leq g^*$. We also define the \textit{maximal function} $f^{**} \colon (0,\infty) \to [0, \infty ]$ of $f\in \M(R,\mu)$ as
\begin{equation*}
f^{**}(t)=\frac1{t} \int_0^ t f^{*}(s) \dd s,\ t\in(0,\infty).
\end{equation*}
We always have $f^*\leq f^{**}$. Unlike $f^*$, $f^{**}$ need not be equimeasurable with $f$.

We say that a functional $\|\cdot\|_{X(0,1)}\colon \Mpl(0,1) \to [0, \infty]$ is a \emph{rearrangement-invariant function norm} (we will write an \emph{r.i.\ function norm}) if for all $f,g,f_k\in\Mpl(\RR, \mu)$, $k\in\N$, and $\lambda\geq0$:
\begin{enumerate}[label=(P\arabic*), ref=(P\arabic*)]
	\item $\|f + g\|_{X(0,1)} \leq \|f\|_{X(0,1)} + \|g\|_{X(0,1)}$; $\|f\|_{X(0,1)} = 0$ if and only if $f=0$~a.e.; $\|\lambda f\|_{X(0,1)} = \lambda \|f\|_{X(0,1)}$;
	\item $\|f\|_{X(0,1)} \leq \|g\|_{X(0,1)}$ whenever $f\leq g$ a.e.;
	\item $\|f_k\|_{X(0,1)} \nearrow \|f\|_{X(0,1)}$ whenever $f_k\nearrow f$ pointwise a.e.;
	\item $\|1\|_{X(0,1)} = \|\chi_{(0,1)}\|_{X(0,1)} < \infty$;
	\item $\|f\|_{L^1(0,1)}\leq C_{X} \|f\|_{X(0,1)}$, where $C_{X}\in(0, \infty)$ is a constant that is independent of $f$ (but it may depend on $\|\cdot\|_{X(0,1)}$);
	\item $\|f\|_{X(0,1)} = \|g\|_{X(0,1)}$ whenever $f$ and $g$ are equimeasurable.
\end{enumerate}

Given an r.i.\ function norm $\|\cdot\|_{X(0,1)}$, we extend it to all functions $f\in\M(0,1)$ by
\begin{equation*}
\|f\|_{X(0,1)} = \|\,|f|\,\|_{X(0,1)},\ f\in\M(0,1).
\end{equation*}

The \emph{rearrangement-invariant space} (\emph{r.i.\ space} for short) $X(\RR, \mu)$ is defined as
\begin{equation*}
X(\RR, \mu) = \{f\in\M(\RR,\mu): \|f\|_{X(\RR, \mu)} < \infty\},
\end{equation*}
where the functional $\|\cdot\|_{X(\RR, \mu)}$ is defined as
\begin{equation*}
\|\cdot\|_{X(\RR, \mu)} = \|f^*\|_{X(0,1)},\ f\in\M(\RR, \mu).
\end{equation*}
It is a Banach space (in particular, $\|\cdot\|_{X(\RR, \mu)}$ is a norm on $X(\RR, \mu)$), and the functions from $X(\RR, \mu)$ are finite $\mu$-a.e. An r.i.\ space always contains simple functions. By a simple function, we mean a (finite) linear combination of characteristic functions of sets of finite measure.

Prototypical examples of r.i.\ spaces are the usual $L^p(\RR, \mu)$ spaces (i.e., \emph{Lebesgue spaces}), $p\in[1, \infty]$. The rearrangement invariance of the usual Lebesgue norm $\|\cdot\|_{L^p(\RR, \mu)}$ follows from the layer cake formula (e.g., \cite[Chapter~2, Proposition~1.8]{BS} or \cite[Theorem~1.13]{LL:01}), which tells us that
\begin{equation}\label{prel:Lp_norm_with_rearrangement}
\|f\|_{L^p(\RR,\mu)} = \|f^*\|_{L^p(0, 1)} \quad \text{for every $f\in\M(\RR, \mu)$}.
\end{equation}
We will introduce other r.i.\ spaces later.

We say that a Banach space $A$ is \emph{embedded in} a Banach space $B$, and write $A\hookrightarrow B$, if $A\subseteq B$ (in the set-theoretic sense) and the inclusion is continuous\textemdash that is, there is a constant $C>0$ such that $\|f\|_B \leq C \|f\|_A$ for every $f\in A$. We say that $A$ and $B$ are equivalent, and write $A = B$, if $A\hookrightarrow B$ and $B\hookrightarrow A$. In other words, $A$ and $B$ coincide in the set-theoretic sense and their norms are equivalent. If $X(\RR, \mu)$ and $Y(\RR, \mu)$ are r.i.\ spaces, then $X(\RR, \mu) \hookrightarrow Y(\RR, \mu)$ if and only if $X(\RR, \mu) \subseteq Y(\RR, \mu)$. Note that $X(\RR, \mu) \hookrightarrow Y(\RR, \mu)$ if and only if $X(0,1) \hookrightarrow Y(0,1)$. Furthermore, we always have
\begin{equation}\label{prel:Linfty_and_Lone_emb}
L^\infty(\RR,\mu) \hookrightarrow X(\RR,\mu) \hookrightarrow L^1(\RR,\mu).
\end{equation}

With any r.i.\ function norm $\|\cdot\|_{X(0,1)}$, there is associated another r.i.\ function norm, $\|\cdot\|_{X'(0,1)}$, defined for $g \in  \M(0,1)$ as
\begin{equation*}
\|g\|_{X'(0,1)}=\sup_{\|f\|_{X(0,1)}\leq1} \int_0^1 |f(t)||g(t)| \dd{t},\ g\in  \M(0,1).
\end{equation*}
The r.i.\ function norm $\|\cdot\|_{X'(0,1)}$ is called the \emph{associate norm} of $\|\cdot\|_{X(0,1)}$. The resulting r.i.\ space $X'(\RR, \mu)$ is called the \emph{associate space} (of $X(\RR, \mu)$). We have
\begin{equation}\label{prel:holder}
\int_{\RR} |f||g| \dd\mu \leq \|f\|_{X(\RR, \mu)} \|g\|_{X'(\RR,\mu)} \quad \text{for all $f,g\in\M(\RR,\mu)$}.
\end{equation}
We always have $X''(\RR, \mu) = X(\RR, \mu)$ with equality of the norms, where $X''(\RR, \mu) = (X')'(\RR, \mu)$. Furthermore, we have
$X(\RR, \mu) \hookrightarrow Y(\RR, \mu)$ if and only if $Y'(\RR, \mu) \hookrightarrow X'(\RR, \mu)$. For example, if $X(\RR, \mu) = L^p(\RR, \mu)$, where $p\in[1, \infty]$, then $X'(\RR, \mu) = L^{p'}(\RR, \mu)$, with equality of the norms.

\emph{Dilation} is always bounded on r.i.\ spaces over $(0,1)$. More precisely, for every $a>0$, the dilation operator $D_a\colon \M(0,1) \to \M(0,1)$ is define as
\begin{equation*}
D_af(t) = f(t/a)\chi_{(0,a)}(t),\ t\in(0,1),\ f\in\M(0,1).
\end{equation*}
We have
\begin{equation}\label{prel:dilation:bounded}
\|Df\|_{X(0,1)} \leq \max\{1,a\} \|f\|_{X(0,1)} \quad \text{for every $f\in\M(0,1)$}
\end{equation}
and every r.i.\ space $X(0,1)$.

The \emph{fundamental function} of an r.i.\ space $X(\RR, \mu)$ is the function $\fundX\colon [0, 1] \to [0, \infty)$ defined as
\begin{equation*}
\fundX(t) = \|\chi_E\|_{X(\RR, \mu)},\ t\in[0, 1],
\end{equation*}
where $E\subseteq\RR$ is any $\mu$-measurable set with $\mu(E) = t$. The rearrangement invariance of $\|\cdot\|_{X(\RR, \mu)}$ ensures that the fundamental function is well defined. Note that $\fundX[X(\RR,\mu)] = \fundX[X(0,1)]$. Furthermore, $\fundX$ is quasiconcave and satisfies
\begin{equation*}
\fundX(t) \fundX[X'](t) = t \quad \text{for every $t\in[0,1]$}.
\end{equation*}
A function $\varphi\colon [0,1] \to [0, \infty)$ is \emph{quasiconcave} if $\varphi$ is nondecreasing, the function $(0,1]\ni t\mapsto \varphi(t)/t$ is nonincreasing, and $\varphi(t) = 0$ if and only if $t = 0$. A quasiconcave function is always continuous on $(0, 1]$, but it may have a jump at $0$. For example, if $X(\RR, \mu) = L^p(\RR, \mu)$, then $\fundX(t) = t^{1/p}$, $t\in(0,1]$. The fundamental function of an r.i.\ space $X(\RR, \mu)$ need not be concave, but $X(\RR, \mu)$ can always be equivalently renormed with another r.i.\ function norm in such a way that the resulting fundamental function is concave. More specifically, we can do it in such a way that the resulting fundamental function is the least nondecreasing concave majorant of $\fundX$. Given a quasiconcave function $\varphi\colon[0,1]\to[0,\infty)$, its least nondecreasing concave majorant $\tilde{\varphi}$ satisfies
\begin{equation}\label{prel:concave_majorant_pointwise_equiv}
\varphi(t) \leq \tilde{\varphi}(t) \leq 2 \varphi(t) \quad \text{for every $t\in[0,1]$}.
\end{equation}
Since we do not aim to compute exact values of constants, we may always without loss of generality assume that the fundamental function of an r.i.\ space is concave.

Given a quasiconcave function $\varphi\colon[0,1]\to[0,\infty)$, we define functionals $\|\cdot\|_{M_\varphi(0, 1)}$ and $\|\cdot\|_{m_\varphi(0, 1)}$ as
\begin{align*}
\|f\|_{M_\varphi(0, 1)} &= \sup_{t\in(0,1)} f^{**}(t) \varphi(t),\ f\in\M(0,1),\\
\|f\|_{m_\varphi(0, 1)} &= \sup_{t\in(0,1)} f^*(t) \varphi(t),\ f\in\M(0,1).
\end{align*}
The functional $\|\cdot\|_{M_\varphi(0, 1)}$ is an r.i.\ function norm and we have $\fundX[M_\varphi] = \varphi$. Furthermore, we always have
\begin{equation}\label{prel:fundamental_endpoint_embedding_into_M}
X(\RR, \mu) \hookrightarrow M_{\varphi_X}(\RR, \mu).
\end{equation}
The functional $\|\cdot\|_{m_\varphi(0, 1)}$ need not be an r.i.\ function norm, but we always have $M_{\varphi}(\RR, \mu) \hookrightarrow m_{\varphi}(\RR, \mu)$, with $m_\varphi(\RR, \mu)$ being defined in the obvious way. Furthermore, we have
\begin{equation*}
M_\varphi(\RR, \mu) = m_\varphi(\RR, \mu)
\end{equation*}
if and only if (e.g., see \cite[Lemma~2.1]{MO:19} and references therein)
\begin{equation}\label{prel:M=m_int_cond}
\frac1{t}\int_0^t \frac1{\varphi(s)} \dd{s} \lesssim \frac1{\varphi(t)} \quad \text{for every $t\in(0,1)$}.
\end{equation}
When $\|\cdot\|_{X(0,1)}$ is an r.i.\ function norm, we set $\|\cdot\|_{M_X(0,1)} = \|\cdot\|_{M_{\varphi_X}(0, 1)}$ and $\|\cdot\|_{m_X(0,1)} = \|\cdot\|_{m_{\varphi_X}(0, 1)}$. The spaces $M_X(\RR, \mu)$ and $m_X(\RR, \mu)$ are sometimes called \emph{Marcinkiewicz endpoint spaces}.

Let $\varphi\colon[0,1]\to[0,\infty)$ be a not identically zero concave function such that $\varphi(0) = 0$. We define the functional $\|\cdot\|_{\Lambda_\varphi(0, 1)}$ as
\begin{equation*}
\|f\|_{\Lambda_\varphi(0, 1)} = \|f\|_{L^\infty(0,1)}\varphi(0^+) + \int_0^1 f^*(s) \varphi'(s) \dd{s},\ f\in\M(0,1).
\end{equation*}
The functional $\|\cdot\|_{\Lambda_\varphi(0, 1)}$ is an r.i.\ function norm and we have $\fundX[\Lambda_\varphi] = \varphi$. Furthermore, we always have
\begin{equation}\label{prel:fundamental_endpoint_embedding_LorLam_to_X}
\Lambda_{\varphi_X}(\RR,\mu) \hookrightarrow X(\RR,\mu),
\end{equation}
assuming $\fundX$ is concave. Under the same assumption, we set $\|\cdot\|_{\Lambda_X(0, 1)} = \|\cdot\|_{\Lambda_{\varphi_X}(0, 1)}$. The space $\Lambda_X(\RR, \mu)$ is sometimes called a \emph{Lorentz endpoint space}.

The associate spaces of $M_\varphi$ and $\Lambda_\varphi$ satisfy the following. Given a quasiconcave function $\varphi\colon[0,1]\to[0,\infty)$, we have
\begin{equation}\label{prel:asoc_of_M_is_endpoint_Lorentz}
\big(M_\varphi\big)'(\RR, \mu) = \Lambda_{\tilde{\psi}}(\RR, \mu),
\end{equation}
where $\tilde{\psi}\colon[0,1]\to[0,\infty)$ is the least nondecreasing concave majorant of the quasiconcave function $\psi(t) = t/\varphi(t)$, $t\in(0,1]$. On the other hand, given a not identically zero concave function $\varphi\colon[0,1]\to[0,\infty)$ such that $\varphi(0) = 0$, we have
\begin{equation}\label{prel:asoc_of_endpoint_Lorentz_is_M}
\Big(\Lambda_\varphi\Big)'(0,1) = M_\psi(0,1),
\end{equation}
where $\psi(t)$ = $t/\varphi(t)$, $t\in(0,1]$.

The interested reader is referred to \cite{BS} for more information about r.i.\ spaces.

A large number of r.i.\ spaces are particular instances of so-called \emph{Lambda spaces} $\Lambda^q_w$ for a suitable weight $w$. We say that $w\in\Mpl(0,1)$ is a \emph{weight} if $0 < W(t) \leq W(1) < \infty$ for every $t\in(0,1)$, where
\begin{equation*}
W(t) = \int_0^t w(s) \dd{s},\ t\in[0,1].
\end{equation*}
For $q\in[1, \infty)$ and a weight $w$, we defined the functional $\|\cdot\|_{\Lambda^q_w(0,1)}$ as
\begin{equation*}
\|f\|_{\Lambda^q_w(0,1)} = \Big( \int_0^1 f^*(t)^q w(t) \dd{t} \Big)^\frac1{q},\ f\in\M(0,1).
\end{equation*}
Given a weight $w$, the functional $\|\cdot\|_{\Lambda^q_w(0,1)}$ is equivalent to an r.i.\ function norm if and only if (for $q\in(1,\infty)$, see \cite{GS:14,S:90}; for $q=1$, see \cite{CGS:96})
\begin{equation}\label{prel:when_Lambda_equiv_ri}
\begin{cases}
W(t_2)/t_2 \lesssim W(t_1)/t_1 \quad \text{for all $0<t_1 \leq t_2 \leq 1$}, \quad&\text{if $q=1$},\\
\int_0^t W(s)^{1 - q'} s^{q' - 1} \dd{s} \lesssim t^{q'} W(t)^{1 - q'} \quad \text{for every $t\in(0,1)$}, \quad&\text{if $q\in(1, \infty)$}.
\end{cases}
\end{equation}
When this is the case, we will consider $\|\cdot\|_{\Lambda^q_w(0,1)}$ to be an r.i.\ function norm. Furthermore, the conditions \eqref{prel:when_Lambda_equiv_ri} imply that
\begin{equation}\label{prel:Lambda_delta2}
\sup_{t\in(0,1/2)} \frac{W(2t)}{W(t)} < \infty.
\end{equation}
For example, for $p,q\in[1, \infty)$, we define the weight $w_{p,q}$ as
\begin{equation}\label{prel:Lorentz_Lpq_weight}
w_{p,q}(t) = t^{\frac{q}{p} - 1},\ t\in(0,1).
\end{equation}
Then \eqref{prel:when_Lambda_equiv_ri} is satisfied if and only if $p\in(1,\infty)$ or $p=q=1$, and
\begin{equation*}
\Lambda^q_{w_{p,q}}(\RR,\mu) = L^{p,q}(\RR, \mu)
\end{equation*}
is the \emph{Lorentz space} $L^{p,q}$. In particular, when $p=q\in[1, \infty)$, $\Lambda^p_{w_{p,p}}(\RR,\mu) = L^{p,p}(\RR, \mu) = L^p(\RR,\mu)$ is the Lebesgue space (recall \eqref{prel:Lp_norm_with_rearrangement}). Another (more general) important example is
\begin{equation*}
w_{p,q,\alpha}(t) = t^{\frac{q}{p}-1}\log(e/t)^{q\alpha},\ t\in(0,1),
\end{equation*}
for $p\in[1, \infty]$, $q\in[1, \infty)$, and $\alpha\in\R$. Then \eqref{prel:when_Lambda_equiv_ri} is satisfied if and only if $p=q=1$ and $\alpha\geq0$, $p\in(1, \infty)$, or $p=\infty$ and $\alpha + 1/q<0$, and
\begin{equation*}
\Lambda^q_{w_{p,q,\alpha}}(\RR,\mu) = L^{p,q,\alpha}(\RR,\mu)
\end{equation*}
is the \emph{Lorentz--Zygmund space} $L^{p,q,\alpha}$. In particular, for $p=1$ and $\alpha\geq0$ or $p\in(1, \infty)$,
\begin{equation*}
\Lambda^p_{w_{p,p,\alpha/p}}(\RR,\mu) = L^p(\Log L)^\alpha(\RR, \mu)
\end{equation*}
is the Orlicz space $L^A(\RR,\mu) = L^p(\Log L)^\alpha(\RR, \mu)$ generated by a Young function $A$ satisfying
\begin{equation*}
A(t) \approx t^p (1+\log t)^\alpha \quad \text{near $\infty$}.
\end{equation*}
See \cite{BR:80, OP:99} for more information. Recall that a function $A\colon[0, \infty] \to [0, \infty]$ is called a \emph{Young function} if it is convex, left-continuous, vanishing at $0$, and not constant on the entire interval $(0, \infty)$. Given a Young function $A$, the \emph{Orlicz space} $L^A(\RR, \mu)$ is defined as the collection of those $f\in\M(\RR, \mu)$ for which
\begin{equation*}
\|f\|_{L^A(\RR,\mu)} = \inf\Bigg\{ \lambda>0: \int_{\RR} A\Big( \frac{|f(x)|}{\lambda} \Big) \dd\mu(x) \leq 1 \Bigg\} < \infty.
\end{equation*}
We have $\|f\|_{L^A(\RR, \mu)} = \|f^*\|_{L^A(0,1)}$ for every $f\in\M(\RR, \mu)$, and $\|\cdot\|_{L^A(0,1)}$ is an r.i.\ function norm.
\subsection{Sobolev spaces built upon r.i.\ spaces}\label{subsec:Sob_spaces}
Let $X(\Omega)$ be an r.i.\ space and $m\in\N$. The \emph{Sobolev space} $\VmXOm$ is defined as the space of all $m$-times weakly differentiable functions $u$ on $\Omega$ whose continuation by $0$ outside $\Omega$ is an $m$-times weakly differentiable function and 
\begin{equation*}
\|u\|_{\VmXOm} = \|\nabla^m u\|_{X(\Omega)} = \|\,|\nabla^m u|\,\|_{X(\Omega)} < \infty,
\end{equation*}
where $\nabla^m u$ is the (arbitrarily arranged) vector of all $m$th order (weak) derivatives of $u$. It is a Banach space (in particular, $\|\cdot\|_{\VmXOm}$ is a norm on $\VmXOm$). In fact, we have (see~\cite[Proposition~4.5]{CPS:15})
\begin{equation*}
\|u\|_{W_0^m X(\Omega)} = \|u\|_{X(\Omega)} + \sum_{j = 1}^m \| \nabla^j u\|_{X(\Omega)} \approx \|\nabla^m u\|_{X(\Omega)} \quad \text{for every $u\in \VmXOm$}.
\end{equation*}
In particular, we could equivalently work with the Sobolev space $W_0^m X(\Omega)$ instead of $\VmXOm$.

By \cite[Theorem~A]{KP:06}, for every r.i.\ space $X(\Omega)$, there is an r.i.\ space $Y_X(\Omega)$ that is optimal in Sobolev embeddings for $\VmXOm$ (equivalently, for $W_0^m X(\Omega)$) in the following sense: The Sobolev embedding
\begin{equation}\label{prel:optimal_Sob_emb}
\VmXOm \hookrightarrow Y_X(\Omega)
\end{equation}
is valid, and whenever \eqref{prel:optimal_Sob_emb} is valid with $Y_X(\Omega)$ replaced by an r.i.\ space $Y(\Omega)$, we have $Y_X(\Omega) \hookrightarrow Y(\Omega)$. Furthermore, we have
\begin{equation}\label{prel:optimal_asoc_norm}
\|g\|_{Y_X'(0,1)} = \|t^\frac{m}{n} g^{**}(t)\|_{X'(0,1)} \quad \text{for every $g\in\M(0,1)$}.
\end{equation}

\subsection{Strictly singular operators} \label{4.3}
Let $A$ and $B$ be Banach spaces ($A$ being infinite-dimensional), and $T\colon A \to B$ be a bounded linear operator. We say that $T$ is \emph{strictly singular} if it is not bounded from below on any infinite-dimensional subspace of $A$. More precisely, for every infinite-dimensional linear subspace $Z\subseteq A$, we have
\begin{equation*}
    \inf_{\substack{x\in Z\\\|x\|_A = 1}} \|Tx\|_B = 0.
\end{equation*}
The operator $T$ as said to be \emph{finitely strictly singular} if for every $\varepsilon>0$ there exists $n\in \N$ such that for each $Z\subseteq A$, $\dim(Z)\ge n$, we have $\inf_{x\in Z, |x\|_A = 1} \|Tx\|_B \leq \varepsilon$.
We always have
\begin{equation*}
    \emph{$T$ is compact} \Rightarrow \emph{$T$ is finitely strict.\ sing.} \Rightarrow \emph{$T$ is strictly singular} \Rightarrow \emph{$T$ is bounded},
\end{equation*}
and none of the implications can be reversed in general. The interested reader is referred to \cite{AK:16,LR-P:14,P:04} and references therein for more information about strictly singular operators and related concepts.

\section{Optimal Sublimiting Embeddings} \label{OptimalSublim}
The main result of this section is \cref{thm:small_support_big_norm}, which will play an important role in the proof of our main result, \cref{thm:optimal_embedding_Lambda_spaces_not_SS}. To prove \cref{thm:small_support_big_norm}, we first need to establish a few results regarding the optimal Sobolev embedding \eqref{prel:optimal_Sob_emb}, which are of independent interest. We start with a proposition telling us that the ``small'' and ``capital'' endpoint spaces $m$ and $M$ corresponding to optimal spaces in Sobolev embeddings \eqref{prel:optimal_Sob_emb} coincide.
\begin{proposition}
Let $\|\cdot\|_{X(0,1)}$ be an r.i.\ function norm. Then
\begin{equation}\label{E:m_and_M_coincide_for_optimal_tar:1}
M_{Y_X}(0,1) = m_{Y_X}(0,1),
\end{equation}
where $Y_X$ is the optimal target r.i.\ space in the Sobolev embedding \eqref{prel:optimal_Sob_emb}.
\end{proposition}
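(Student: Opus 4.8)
The plan is to reduce \eqref{E:m_and_M_coincide_for_optimal_tar:1} to the integral condition \eqref{prel:M=m_int_cond} for the (quasiconcave) fundamental function $\varphi_{Y_X}$, and then to verify that condition by hand using the explicit formula \eqref{prel:optimal_asoc_norm} for the associate norm of $Y_X$. Since $M_{Y_X}=M_{\varphi_{Y_X}}$ and $m_{Y_X}=m_{\varphi_{Y_X}}$ by definition, the characterization recalled around \eqref{prel:M=m_int_cond} (see \cite[Lemma~2.1]{MO:19}) tells us that \eqref{E:m_and_M_coincide_for_optimal_tar:1} is equivalent to
\begin{equation*}
\frac1t\int_0^t\frac{\dd{s}}{\varphi_{Y_X}(s)}\lesssim\frac1{\varphi_{Y_X}(t)},\qquad t\in(0,1);
\end{equation*}
using the fundamental-function identity $\varphi_{Y_X}(s)\,\varphi_{Y_X'}(s)=s$, this is in turn equivalent to
\begin{equation*}
\int_0^t\frac{\varphi_{Y_X'}(s)}{s}\dd{s}\lesssim\varphi_{Y_X'}(t),\qquad t\in(0,1),
\end{equation*}
which is the inequality I would prove. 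Write $\psi:=\varphi_{Y_X'}$.

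Applying \eqref{prel:optimal_asoc_norm} to $g=\chi_{(0,s)}$, whose maximal function is $(\chi_{(0,s)})^{**}(r)=\min\{1,s/r\}$, and observing that $r^{m/n}\min\{1,s/r\}$ equals $r^{m/n}$ on $(0,s)$ and $s\,r^{m/n-1}$ on $(s,1)$, one gets the exact identity
\begin{equation*}
\psi(s)=\bigl\| r^{m/n}\chi_{(0,s)}+s\,r^{m/n-1}\chi_{(s,1)} \bigr\|_{X'(0,1)},\qquad s\in(0,1).
\end{equation*}
The one elementary estimate I would isolate is that $\bigl\| r^{m/n}\chi_{(0,s)} \bigr\|_{X'(0,1)}\approx s^{m/n}\varphi_{X'}(s)$ for $s\in(0,1)$: the upper bound is immediate from $r^{m/n}\chi_{(0,s)}\le s^{m/n}\chi_{(0,s)}$ and (P1)--(P2), and the lower bound follows from $r^{m/n}\chi_{(0,s)}\ge(s/2)^{m/n}\chi_{(s/2,s)}$ together with $\|\chi_{(s/2,s)}\|_{X'(0,1)}=\varphi_{X'}(s/2)\approx\varphi_{X'}(s)$ (quasiconcavity). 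Feeding this back into the identity for $\psi$, the triangle inequality gives the upper bound $\psi(s)\lesssim s^{m/n}\varphi_{X'}(s)+s\,\| r^{m/n-1}\chi_{(s,1)} \|_{X'(0,1)}$, while (P2), applied to each of the two disjointly supported summands separately, gives the lower bounds $\psi(t)\gtrsim t^{m/n}\varphi_{X'}(t)$ and $\psi(t)\ge t\,\| r^{m/n-1}\chi_{(t,1)} \|_{X'(0,1)}$, for all $s,t\in(0,1)$.

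With these estimates in hand, the target inequality follows from routine computations. By the upper bound on $\psi(s)$,
\begin{equation*}
\int_0^t\frac{\psi(s)}{s}\dd{s}\lesssim\int_0^t s^{m/n-1}\varphi_{X'}(s)\dd{s}+\int_0^t\bigl\| r^{m/n-1}\chi_{(s,1)} \bigr\|_{X'(0,1)}\dd{s}.
\end{equation*}
For the first integral, monotonicity of $\varphi_{X'}$ yields $\int_0^t s^{m/n-1}\varphi_{X'}(s)\dd{s}\le\varphi_{X'}(t)\int_0^t s^{m/n-1}\dd{s}\approx t^{m/n}\varphi_{X'}(t)\lesssim\psi(t)$. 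For the second, split $\chi_{(s,1)}=\chi_{(s,t)}+\chi_{(t,1)}$ for $s\in(0,t)$: the $\chi_{(t,1)}$-part contributes $t\,\| r^{m/n-1}\chi_{(t,1)} \|_{X'(0,1)}\le\psi(t)$, and for the $\chi_{(s,t)}$-part one uses $r^{m/n-1}\chi_{(s,t)}\le s^{m/n-1}\chi_{(0,t)}$ (as $m/n-1<0$) to get $\int_0^t\| r^{m/n-1}\chi_{(s,t)} \|_{X'(0,1)}\dd{s}\le\varphi_{X'}(t)\int_0^t s^{m/n-1}\dd{s}\approx t^{m/n}\varphi_{X'}(t)\lesssim\psi(t)$. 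Adding up gives $\int_0^t\psi(s)/s\,\dd{s}\lesssim\psi(t)$, which is what is needed.

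The main obstacle is the middle step: extracting from the abstract formula \eqref{prel:optimal_asoc_norm} the workable two-sided description of $\psi=\varphi_{Y_X'}$ — in particular the estimate $\| r^{m/n}\chi_{(0,s)} \|_{X'(0,1)}\approx s^{m/n}\varphi_{X'}(s)$ and correctly isolating the behavior of the test function on the ``spike'' set $\{r<s\}$, where the decreasing part $s\,r^{m/n-1}\chi_{(s,1)}$ and the increasing part $r^{m/n}\chi_{(0,s)}$ interact. Once that is set up, the remaining steps only use monotonicity and quasiconcavity of fundamental functions together with the elementary $\int_0^t s^{m/n-1}\,\dd{s}\approx t^{m/n}$ (valid since $m/n>0$).
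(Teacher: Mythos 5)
Your argument is correct for $m<n$, and it takes a genuinely different route from the paper. The paper invokes \cite[Lemma~2.1]{S:72} to reduce \eqref{prel:M=m_int_cond} to the (stronger) power-type estimate $\varphi_{Y_X'}(a)\lesssim(a/b)^{m/n}\varphi_{Y_X'}(b)$ for $0<a\le b<1$, and then proves that by splitting $s^{m/n}\chi^{**}_{(0,a)}(s)$ at $s=a/b$, controlling the near-origin piece by a dilation estimate \eqref{prel:dilation:bounded} and the far piece by H\"older \eqref{prel:holder}. You instead verify the integral condition \eqref{prel:M=m_int_cond} head-on: you compute $\psi(s)=\varphi_{Y_X'}(s)$ explicitly from \eqref{prel:optimal_asoc_norm} with $g=\chi_{(0,s)}$, extract the two-sided comparison of the near-origin piece $\|r^{m/n}\chi_{(0,s)}\|_{X'(0,1)}\approx s^{m/n}\varphi_{X'}(s)$, and then estimate $\int_0^t\psi(s)/s\,\dd{s}$ directly using monotonicity of $\varphi_{X'}$ and the splitting $\chi_{(s,1)}=\chi_{(s,t)}+\chi_{(t,1)}$. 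Your route bypasses Sharpley's lemma entirely and is arguably more elementary; the paper's route, while relying on an external reduction, produces a cleaner intermediate inequality and avoids unpacking $\psi$ pointwise.

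One small omission: the paper's Proposition carries no hypothesis $m<n$, and your step ``$r^{m/n-1}\chi_{(s,t)}\le s^{m/n-1}\chi_{(0,t)}$ (as $m/n-1<0$)'' uses $m<n$. The paper dispatches $m\geq n$ at the outset by noting $Y_X=L^\infty$, which makes \eqref{E:m_and_M_coincide_for_optimal_tar:1} trivial; you should add the same one-line reduction (or bound $r^{m/n-1}$ by $t^{m/n-1}$ on $(s,t)$ when $m\ge n$, which still yields $t^{m/n}\varphi_{X'}(t)$ after integration). With that amendment the proof is complete.
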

\begin{proof}
Assume that $m\geq n$. Since $V^m_0 L^1(\Omega) \to L^\infty(\Omega)$ when $m\geq n$, it follows from \eqref{prel:Linfty_and_Lone_emb} that we have $\optTar{X}(0,1) = L^\infty(0,1)$ for every r.i.\ space $X(0,1)$. Hence, \eqref{E:m_and_M_coincide_for_optimal_tar:1} trivially holds. Therefore, from now on, we assume that $m < n$.

Recall that the validity of \eqref{E:m_and_M_coincide_for_optimal_tar:1} is equivalent to that of \eqref{prel:M=m_int_cond} with $\varphi = \fundOptX$. It follows from \cite[Lemma~2.1]{S:72} that in order to establish \eqref{prel:M=m_int_cond} with $\varphi = \fundOptX$, it is sufficient to show that
\begin{equation}\label{lem:m_and_M_coincide_for_optimal_tar:eq:2}
\fundOptXAsoc(a) \lesssim \Big( \frac{a}{b} \Big)^\frac{m}{n} \fundOptXAsoc(b) \quad \text{for all $0 < a\leq b < 1$}.
\end{equation}
To this end, using \eqref{prel:optimal_asoc_norm}, we have
\begin{equation}\label{lem:m_and_M_coincide_for_optimal_tar:eq:1}
\fundOptXAsoc(a) = \Big\| t^{\frac{m}{n}-1} \int_0^t \chi_{(0,a)}(s) \dd{s} \Big\|_{X'(0,1)} \quad \text{for every $a\in(0,1)$}.
\end{equation}
Note that
\begin{align}
\Big\| t^{\frac{m}{n}-1} \int_0^t \chi_{(0,a)}(s) \dd{s} \Big\|_{X'(0,1)} &= \frac{a}{b} \Big\| t^{\frac{m}{n}-1} \int_0^{\frac{b}{a}t} \chi_{(0,b)}(\tau) \dd{\tau} \Big\|_{X'(0,1)} \nonumber\\
&= \Big(\frac{a}{b}\Big)^\frac{m}{n} \Big\| \Big(\frac{b}{a}t \Big)^{\frac{m}{n} - 1} \int_0^{\frac{b}{a}t} \chi_{(0,b)}(\tau) \dd{\tau} \Big\|_{X'(0,1)} \nonumber\\
&\leq \Big(\frac{a}{b}\Big)^\frac{m}{n} \Big\| \Big(\frac{b}{a}t \Big)^{\frac{m}{n} - 1} \int_0^{\frac{b}{a}t} \chi_{(0,b)}(\tau) \dd{\tau} \chi_{(0, a/b)}(t)\Big\|_{X'(0,1)} \nonumber\\
&\quad+ \Big(\frac{a}{b}\Big)^\frac{m}{n} \Big\| \Big(\frac{b}{a}t \Big)^{\frac{m}{n} - 1} \int_0^{\frac{b}{a}t} \chi_{(0,b)}(\tau) \dd{\tau} \chi_{(a/b,1)}(t) \Big\|_{X'(0,1)}. \label{lem:m_and_M_coincide_for_optimal_tar:eq:3}
\end{align}
We have
\begin{align}
	\Big\| \Big(\frac{b}{a}t \Big)^{\frac{m}{n} - 1} \int_0^{\frac{b}{a}t} \chi_{(0,b)}(\tau) \dd{\tau} \chi_{(0, a/b)}(t)\Big\|_{X'(0,1)} &= \| D_{a/b}[s^\frac{m}{n}\chi_{(0,b)}^{**}(s)](t) \|_{X'(0,1)} \nonumber\\
	&\leq \|t^\frac{m}{n}\chi_{(0,b)}^{**}(t)\|_{X'(0,1)} = \fundOptXAsoc(b) \label{lem:m_and_M_coincide_for_optimal_tar:eq:4}
\end{align}
thanks to \eqref{prel:dilation:bounded}. Furthermore, using \eqref{prel:holder}, we see that
\begin{align}
\Big\| \Big(\frac{b}{a}t \Big)^{\frac{m}{n} - 1} \int_0^{\frac{b}{a}t} \chi_{(0,b)}(\tau) \dd{\tau} \chi_{(a/b,1)}(t) \Big\|_{X'(0,1)} &= \int_0^1 \chi_{(0,b)}(\tau) \dd{\tau} \Big\| \Big(\frac{b}{a}t \Big)^{\frac{m}{n} - 1}  \chi_{(a/b,1)}(t) \Big\|_{X'(0,1)} \nonumber\\
&\leq \fundOptXAsoc(b) \fundOptX(1) \Big\| \Big(\frac{b}{a}t \Big)^{\frac{m}{n} - 1}  \chi_{(a/b,1)}(t) \Big\|_{X'(0,1)} \nonumber\\
&\leq \fundOptXAsoc(b) \fundOptX(1)  \fundX[X'](1). \label{lem:m_and_M_coincide_for_optimal_tar:eq:5}
\end{align}
Finally, by combining \eqref{lem:m_and_M_coincide_for_optimal_tar:eq:1}, \eqref{lem:m_and_M_coincide_for_optimal_tar:eq:3}, \eqref{lem:m_and_M_coincide_for_optimal_tar:eq:4}, and \eqref{lem:m_and_M_coincide_for_optimal_tar:eq:5}, we obtain \eqref{lem:m_and_M_coincide_for_optimal_tar:eq:2}, which concludes the proof.
\end{proof}

\begin{definition}\label{def:sublimiting}
Let $1\leq m < n$. Let $\|\cdot\|_{X(0,1)}$ be an r.i.\ function norm. We say that $\|\cdot\|_{X(0,1)}$ is \emph{sublimiting} (in Sobolev embeddings for $\VmXOm$) if
\begin{equation}\label{E:sublimiting}
\fundX[\optTar{X}](t) \approx t^{-\frac{m}{n}} \fundX(t) \quad \text{for every $t\in(0,1)$},
\end{equation}
where $\optTar{X}$ is the optimal target r.i.\ space in the Sobolev embedding \eqref{prel:optimal_Sob_emb}.
\end{definition}

\begin{remark}\label{rem:sublimiting_norms}\
\begin{enumerate}[label=(\roman*), ref=(\roman*)]
	\item Since the functions on both sides of \eqref{E:sublimiting} are positive and continuous on the interval $[\delta, 1]$ for each $\delta\in(0,1)$, in order to establish the equivalence, it is sufficient to establish it for every $t\in(0, \delta)$ for some $\delta>0$.
	\item\label{rem:sublimiting_norms:item:nonlimiting_int_cond} By \cite[Theorem~4.5]{MPT:23}, an r.i.\ function norm $\|\cdot\|_{X(0,1)}$ is sublimiting if
		\begin{equation}\label{E:not_limiting_int_cond}
			\int_t^1 \frac{s^{-1 + \frac{m}{n}}}{\fundX(s)} \dd{s} \lesssim \frac{t^\frac{m}{n}}{\fundX(t)} \quad \text{for every $t\in(0,1)$}
		\end{equation}
		is satisfied. In particular, a usually easily verifiable sufficient condition for \eqref{E:not_limiting_int_cond} (and in turn also for \eqref{E:sublimiting}), which can often be used, is that there is $\alpha > m/n$ such that the function
		\begin{equation*}
			(0,1)\ni t\mapsto \frac{t^\alpha}{\fundX(t)}
		\end{equation*}
		is equivalent to a nonincreasing function.
\end{enumerate}

\end{remark}

It is not entirely accurate to call r.i.\ function norms $\|\cdot\|_{X(0,1)}$ for which \eqref{E:sublimiting} is satisfied sublimiting, because it is also satisfied in the limiting situation $X = L^{\frac{n}{m},1}$. However, the following proposition shows that $X = L^{\frac{n}{m},1}$ is the only limiting case, in the sense that $\VmXOm$ is embedded in $L^\infty(\Omega)$, for which \eqref{E:sublimiting} is satisfied. 
\begin{proposition}\label{prop:sublimiting_X_neces_conds}
Let $1\leq m < n$. Let $\|\cdot\|_{X(0,1)}$ be an r.i.\ function norm. If $\|\cdot\|_{X(0,1)}$ is sublimiting, then either
\begin{equation}\label{prop:sublimiting_X_neces_conds:not_in_Lnm1}
X(0,1) \not\subseteq L^{\frac{n}{m},1}(0,1)
\end{equation}
or
\begin{equation}\label{prop:sublimiting_X_neces_conds:equiv_Lnm1}
X(0,1) = L^{\frac{n}{m},1}(0,1).
\end{equation}
\end{proposition}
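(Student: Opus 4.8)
The plan is to prove the statement in the equivalent form ``if \eqref{prop:sublimiting_X_neces_conds:not_in_Lnm1} fails, then \eqref{prop:sublimiting_X_neces_conds:equiv_Lnm1} holds''. Since for \riSps{} set inclusion and continuous embedding coincide, the negation of \eqref{prop:sublimiting_X_neces_conds:not_in_Lnm1} is exactly $X(0,1)\hookrightarrow L^{\frac nm,1}(0,1)$, so I would assume this embedding and aim to deduce $X(0,1) = L^{\frac nm,1}(0,1)$. Throughout I may and do assume that $\fundX$ is concave. The argument then has two steps: first pin down the optimal target as $\optTar{X} = L^\infty$, which via the sublimiting hypothesis forces $\fundX\approx t^{m/n}$; then use this to squeeze $X(0,1)$ between $L^{\frac nm,1}(0,1)$ and itself.

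\emph{Step 1.} From $X(0,1)\hookrightarrow L^{\frac nm,1}(0,1)$ we get $V^m_0 X(\Omega)\hookrightarrow V^m_0 L^{\frac nm,1}(\Omega)$, and composing with the classical optimal limiting embedding $V^m_0 L^{\frac nm,1}(\Omega)\hookrightarrow\mathcal C_b(\Omega)\hookrightarrow L^\infty(\Omega)$ (see~\cite{S:81}) yields $V^m_0 X(\Omega)\hookrightarrow L^\infty(\Omega)$. Since $L^\infty(\Omega)$ is an \riSp{} for which the Sobolev embedding \eqref{prel:optimal_Sob_emb} is valid, the optimality of $\optTar{X}$ (see~\cite{KP:06}) gives $\optTar{X}(\Omega)\hookrightarrow L^\infty(\Omega)$; combining this with the embedding $L^\infty(\Omega)\hookrightarrow\optTar{X}(\Omega)$ from \eqref{prel:Linfty_and_Lone_emb}, I obtain $\optTar{X} = L^\infty$, hence $\fundOptX(t)\approx 1$ for every $t\in(0,1)$. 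Inserting this into the sublimiting relation \eqref{E:sublimiting} gives $\fundX(t)\approx t^{m/n}$ for every $t\in(0,1)$, and in particular $\fundX(0^+) = 0$.

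\emph{Step 2.} Since $\fundX$ is concave with $\fundX(0^+) = 0$ and $\fundX(t)\approx t^{m/n}$, its a.e.\ derivative $\fundX'$ is nonincreasing and integrable near $0$, so $\fundX(s) = \int_0^s\fundX'(r)\dd r$ and therefore $\fundX'(s)\le\fundX(s)/s\lesssim s^{m/n-1}$ for a.e.\ $s\in(0,1)$. Plugging this pointwise bound into the definition of the Lorentz endpoint norm (the $L^\infty$ term vanishes because $\fundX(0^+) = 0$) gives $\|f\|_{\Lambda_X(0,1)}\lesssim\|f\|_{L^{\frac nm,1}(0,1)}$ for every $f\in\M(0,1)$, i.e.\ $L^{\frac nm,1}(0,1)\hookrightarrow\Lambda_X(0,1)$. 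Chaining this with $\Lambda_X(0,1)\hookrightarrow X(0,1)$ (from \eqref{prel:fundamental_endpoint_embedding_LorLam_to_X}, applicable since $\fundX$ is concave) gives $L^{\frac nm,1}(0,1)\hookrightarrow X(0,1)$, which together with the standing assumption $X(0,1)\hookrightarrow L^{\frac nm,1}(0,1)$ yields $X(0,1) = L^{\frac nm,1}(0,1)$, i.e.\ \eqref{prop:sublimiting_X_neces_conds:equiv_Lnm1}.

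The conceptual crux is Step 1: the hypothesis $X\hookrightarrow L^{\frac nm,1}$ drives the optimal Sobolev target all the way down to $L^\infty$, which rests on the sharpness of the classical limiting embedding (equivalently, on $L^{\frac nm,1}$ being the optimal \emph{domain} space for $L^\infty$-valued embeddings). The only point in Step 2 that needs genuine care is that one cannot conclude $X = L^{\frac nm,1}$ just from $\fundX\approx\varphi_{L^{\frac nm,1}}$ — having equivalent fundamental functions does not force two \riSps{} to coincide — which is exactly why the argument is routed through the Lorentz endpoint space $\Lambda_X$ and exploits the explicit pointwise bound $\fundX'(s)\lesssim s^{m/n-1}$ available precisely when $\fundX\approx t^{m/n}$.
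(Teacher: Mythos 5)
Your proof is correct and follows the same strategy as the paper's: assume $X(0,1)\hookrightarrow L^{\frac nm,1}(0,1)$, deduce $Y_X=L^\infty$ and hence, via the sublimiting relation \eqref{E:sublimiting}, $\fundX(t)\approx t^{m/n}$, then conclude $L^{\frac nm,1}(0,1)\hookrightarrow X(0,1)$ by routing through the Lorentz endpoint embedding $\Lambda_{\varphi_X}\hookrightarrow X$. The minor differences in execution (you obtain $Y_X=L^\infty$ directly from the chain $V^m_0X\hookrightarrow L^\infty$ plus optimality and the two-sided trivial embedding, rather than citing the equivalence of that fact with $X\hookrightarrow L^{\frac nm,1}$; and you make the pointwise derivative bound $\fundX'(s)\lesssim s^{m/n-1}$ explicit rather than implicitly using that equivalent concave fundamental functions yield equivalent Lorentz endpoint spaces) are cosmetic and do not change the approach.
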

\begin{proof}
The conditions \eqref{prop:sublimiting_X_neces_conds:not_in_Lnm1} and \eqref{prop:sublimiting_X_neces_conds:equiv_Lnm1} are clearly mutually exclusive. Assume that $\|\cdot\|_{X(0,1)}$ is sublimiting and that \eqref{prop:sublimiting_X_neces_conds:not_in_Lnm1} is not true, that is, we have
\begin{equation}\label{prop:sublimiting_X_neces_conds:eq:1}
X(0,1) \hookrightarrow  L^{\frac{n}{m},1}(0,1).
\end{equation}
We need to show that \eqref{prop:sublimiting_X_neces_conds:equiv_Lnm1} is true. In view of \eqref{prop:sublimiting_X_neces_conds:eq:1}, we only need to prove that
\begin{equation}\label{prop:sublimiting_X_neces_conds:eq:2}
L^{\frac{n}{m},1}(0,1) \hookrightarrow X(0,1).
\end{equation}
To this end, by \eqref{prel:Linfty_and_Lone_emb} and \cite[Theorem~A]{KP:06} (cf.~\cite{S:81}), the validity of \eqref{prop:sublimiting_X_neces_conds:eq:1} is equivalent to the fact that
\begin{equation*}
Y_X(0,1) = L^\infty(0,1),
\end{equation*}
which in turn is equivalent to the fact that (see~\cite[Theorem~5.2]{S:12}, cf.~\cite[Chapter~2, Theorem~5.5]{BS})
\begin{equation*}
\lim_{t\to0^+}\fundOptX(t) > 0.
\end{equation*}
Combining this and \eqref{E:sublimiting}, we see that
\begin{equation*}
t^{-\frac{m}{n}} \fundX(t) \approx 1 \quad \text{for every $t\in(0,1)$}.
\end{equation*}
It follows that
\begin{equation}\label{prop:sublimiting_X_neces_conds:eq:3}
\fundX(t) \approx t^{\frac{m}{n}} \quad \text{for every $t\in(0,1)$}.
\end{equation}
Now, note that
\begin{equation}\label{prop:sublimiting_X_neces_conds:eq:4}
L^{\frac{n}{m},1}(0,1) = \Lambda_\varphi(0,1) \qquad \text{for $\varphi(t) = t^\frac{n}{m}$, $t\in(0,1)$.}
\end{equation}
Hence, by combining \eqref{prop:sublimiting_X_neces_conds:eq:4}, \eqref{prop:sublimiting_X_neces_conds:eq:3}, and \eqref{prel:fundamental_endpoint_embedding_LorLam_to_X}, we obtain \eqref{prop:sublimiting_X_neces_conds:eq:2}.
\end{proof}

\begin{remark}\label{rem:borderling_limiting_cases}
There are borderline cases that are not sublimiting in the sense of \cref{def:sublimiting} and at the same time, nor are they (super)limiting in the sense that $\VmXOm$ is embedded in $L^\infty(\Omega)$. In particular, the latter means that \eqref{prop:sublimiting_X_neces_conds:not_in_Lnm1} is satisfied for such spaces. Typical examples of such borderline cases are Lorentz spaces $X=L^{\frac{n}{m}, q}$ with $q\in(1, \infty]$. Then $Y_X(\Omega)$ is the Lorentz--Zygmund space $L^{\infty,q,-1}(\Omega)$ (e.g., see~\cite[Theorem~5.1]{CP:16}), whose fundamental function satisfies
\begin{equation*}
\fundOptX(t) \approx \log(e/t)^{-1+\frac1{q}} \quad \text{for every $t\in(0, 1)$}.
\end{equation*}
Consequently, \eqref{E:sublimiting} reads as
\begin{equation*}
\log(e/t)^{-1+\frac1{q}} \approx t^{-\frac{m}{n}}t^{\frac{m}{n}} \quad \text{for every $t\in(0, 1)$}
\end{equation*}
in these cases, which is clearly not true.
\end{remark}

Our next proposition is a technical result, which will come handy in the proof of \cref{thm:small_support_big_norm}. We first need to make some definitions and recall an important result. For $a\in(0,1]$, we define
\begin{equation*}
\F_a = \Big\{ f = \sum_{j = 1}^M \alpha_j \chi_{(0, b_j)}: M\in\N, \alpha_j >0, 0<b_1<\dots<b_M\leq a \Big\}
\end{equation*}
and set $\F = \F_1$. In other words, $\F_a$ is the set of all positive nonincreasing simple functions on $(0,1)$ whose support is inside the interval $[0,a]$. Furthermore, for $0\leq a < b \leq 1$, we define
\begin{equation*}
\I_{a,b} = \{f=\alpha\chi_{(a,b)}: \alpha>0\}.
\end{equation*}
Clearly
\begin{equation*}
\I_{0,b} \subseteq \F_b \subseteq \F \quad \text{for every $0< b\leq 1$}.
\end{equation*}

By \cite[Lemma~4.9]{EMMP:20}, we have
\begin{equation}\label{E:H_on_simple}
\Big\| \int_t^1 f(s) s^{-1 + \frac{m}{n}} \dd{s} \Big\|_{Z(0,1)} \approx \Big\| \sum_{j = 1}^M \alpha_j b_j^{\frac{m}{n}} \chi_{(0, b_j)} \Big\|_{Z(0,1)} \quad \text{for every $f\in\F$}
\end{equation}
and for every r.i.\ function norm $\|\cdot\|_{Z(0,1)}$, in which the multiplicative constants depend only on $m$ and $n$.

\begin{proposition}\label{prop:cutting_of_support}
Let $1\leq m < n$. Let $\|\cdot\|_{X(0,1)}$ be an r.i.\ function norm. Assume that $\fundX$ is concave. If $\|\cdot\|_{X(0,1)}$ is sublimiting, then there is a constant $C$ such that
\begin{align}
\sup_{\substack{\|h\|_{\Lambda_X(0,1)}\leq1 \\ h\in\I_{a/2,a}}} \Big\| \int_t^1 h(s) s^{-1 + \frac{m}{n}} \dd{s} \Big\|_{M_{\optTar{X}}(0,1)} &\leq
\sup_{\substack{\|f\|_{\Lambda_X(0,1)}\leq1 \\ f\in\F}} \Big\| \int_t^1 f(s) s^{-1 + \frac{m}{n}} \dd{s} \Big\|_{M_{\optTar{X}}(0,1)} \nonumber\\
&\leq C \sup_{\substack{\|h\|_{\Lambda_X(0,1)}\leq1 \\ h\in\I_{a/2,a}}} \Big\| \int_t^1 h(s) s^{-1 + \frac{m}{n}} \dd{s} \Big\|_{M_{\optTar{X}}(0,1)} \label{E:key_inequality2}
\end{align}
for every $a\in(0,1]$, where $Y_X$ is the optimal target r.i.\ space for $X$ in the Sobolev embedding \eqref{prel:optimal_Sob_emb}.
\end{proposition}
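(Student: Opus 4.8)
The plan is to prove that \emph{both} suprema in~\eqref{E:key_inequality2} are equivalent to the constant~$1$, with equivalence constants depending only on $m$, $n$ and on the constants hidden in the sublimiting relation~\eqref{E:sublimiting}, and---crucially---\emph{not} on $a$. Granting this, the whole chain in~\eqref{E:key_inequality2} follows at once (with $C$ the ratio of the relevant equivalence constants for the second inequality; the first inequality is handled separately below).

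I would begin with two preliminary reductions. Since $\|\cdot\|_{X(0,1)}$ is sublimiting and $\fundOptX$, being a fundamental function, is bounded, \eqref{E:sublimiting} forces $t\mapsto t^{-m/n}\fundX(t)$ to stay bounded as $t\to0^+$; hence $\fundX(0^+)=0$, the $L^\infty$-term in $\|\cdot\|_{\Lambda_X}=\|\cdot\|_{\Lambda_{\fundX}}$ disappears, and for $f=\sum_{j=1}^{M}\alpha_j\chi_{(0,b_j)}\in\F$ (which coincides with its own nonincreasing rearrangement) one has the exact identity
\begin{equation*}
\|f\|_{\Lambda_X}=\int_0^1 f^*(s)\fundX'(s)\dd s=\sum_{j=1}^{M}\alpha_j\fundX(b_j).
\end{equation*}
Also $\|\cdot\|_{M_{\optTar{X}}(0,1)}$ is an \riNorm{}, so \eqref{E:H_on_simple} may be used with $Z=M_{\optTar{X}}$, and $\|\chi_{(0,b)}\|_{M_{\optTar{X}}}=\fundX[M_{\optTar{X}}](b)=\fundOptX(b)$ for every $b\in(0,1]$.

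The core of the argument is the \emph{upper} bound for the middle supremum. For $f=\sum_{j=1}^{M}\alpha_j\chi_{(0,b_j)}\in\F$ with $\|f\|_{\Lambda_X}\le1$, I would chain \eqref{E:H_on_simple} (whose constants depend only on $m,n$), subadditivity of the \riNorm{} $\|\cdot\|_{M_{\optTar{X}}}$, and the sublimiting identity in the form $b_j^{m/n}\fundOptX(b_j)\approx\fundX(b_j)$:
\begin{equation*}
\Big\|\int_t^1 f(s)s^{-1+\frac mn}\dd s\Big\|_{M_{\optTar{X}}}
\lesssim\Big\|\sum_{j=1}^{M}\alpha_j b_j^{\frac mn}\chi_{(0,b_j)}\Big\|_{M_{\optTar{X}}}
\le\sum_{j=1}^{M}\alpha_j b_j^{\frac mn}\fundOptX(b_j)
\lesssim\sum_{j=1}^{M}\alpha_j\fundX(b_j)=\|f\|_{\Lambda_X}\le1 ,
\end{equation*}
and taking the supremum bounds the middle term of~\eqref{E:key_inequality2} by a constant depending only on $m$, $n$ and the sublimiting constant. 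For the matching \emph{lower} bound it suffices to test the rightmost supremum with one spike: set $h_a=\fundX(a/2)^{-1}\chi_{(a/2,a)}\in\I_{a/2,a}$, which has $\|h_a\|_{\Lambda_X}=1$ by equimeasurability of $\chi_{(a/2,a)}$ and $\chi_{(0,a/2)}$; on $(0,a/2)$ the nonincreasing function $t\mapsto\int_t^1 h_a(s)s^{-1+m/n}\dd s$ is identically equal to the positive constant $\frac{n}{m}\bigl(1-2^{-m/n}\bigr)a^{m/n}\fundX(a/2)^{-1}$, so its maximal function is at least that constant on $(0,a/2)$, and therefore, using~\eqref{E:sublimiting} once more,
\begin{equation*}
\Big\|\int_t^1 h_a(s)s^{-1+\frac mn}\dd s\Big\|_{M_{\optTar{X}}}
\gtrsim a^{\frac mn}\fundX(a/2)^{-1}\fundOptX(a/2)
\approx a^{\frac mn}\fundX(a/2)^{-1}(a/2)^{-\frac mn}\fundX(a/2)\approx1 .
\end{equation*}
Thus both suprema in~\eqref{E:key_inequality2} are $\approx1$, uniformly in $a$, which already gives~\eqref{E:key_inequality2} up to a two-sided constant; the second inequality there is the one for which $C$ is needed.

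Finally, the first inequality in~\eqref{E:key_inequality2} is the soft direction: each $h=\alpha\chi_{(a/2,a)}\in\I_{a/2,a}$ is pointwise dominated by $\alpha\chi_{(0,a)}\in\F$, whose $\Lambda_X$-norm stays within the quasiconcavity factor $\fundX(a)/\fundX(a/2)\in[1,2]$ of $\|h\|_{\Lambda_X}$, so monotonicity of $\|\cdot\|_{M_{\optTar{X}}}$ yields this inequality up to an absolute constant (immaterial once the two-sided bound $\approx1$ is in hand). The step I expect to be most delicate is the bookkeeping: verifying that \emph{every} implicit constant above depends only on $m$, $n$ and the constants in~\eqref{E:sublimiting}, so that the comparison is genuinely uniform in $a\in(0,1]$. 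What makes the proposition work is the interplay between~\eqref{E:H_on_simple}, which turns the fractional-integral operator $f\mapsto\int_\cdot^1 f(s)s^{-1+m/n}\dd s$, \emph{restricted to the cone $\F$}, into plain multiplication of the heights $\alpha_j$ by $b_j^{m/n}$, and the sublimiting identity $t^{m/n}\fundOptX(t)\approx\fundX(t)$: together they make this operator behave, as a map $\Lambda_X\to M_{\optTar{X}}$, like a diagonal operator whose action on $\F$ (and, by domination, on $\I_{a/2,a}$) is controlled purely by the fundamental functions $\fundX$ and $\fundOptX$.
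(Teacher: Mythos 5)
Your proof is correct and takes a genuinely different route from the paper's. The paper's argument passes from the $M_{\optTar{X}}$-norm to the $m_{\optTar{X}}$-quasinorm (via the preceding lemma that $M_{Y_X}=m_{Y_X}$), identifies a maximizing index $k$ in the resulting discrete maximum, and then \emph{constructs}, for each given $f\in\F$, a specific competitor $g$ supported on $(0,a)$ and its truncation $h\in\I_{a/2,a}$ with $\|h\|_{\Lambda_X}\le 1$ achieving a comparable value. You instead show directly that \emph{both} suprema in~\eqref{E:key_inequality2} are two-sided comparable to $1$, uniformly in $a\in(0,1]$: the upper bound on the middle supremum follows from~\eqref{E:H_on_simple}, the triangle inequality for $\|\cdot\|_{M_{\optTar{X}}}$, the identity $\|\chi_{(0,b)}\|_{M_{\optTar{X}}}=\fundOptX(b)$, and the sublimiting relation in the form $b^{m/n}\fundOptX(b)\approx\fundX(b)$; the lower bound on the rightmost supremum comes from testing with the single normalized spike $\fundX(a/2)^{-1}\chi_{(a/2,a)}$ and using that the kernel $\int_{a/2}^{a}s^{-1+m/n}\dd s\approx a^{m/n}$ together with~\eqref{E:sublimiting} again. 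This avoids the $M_{Y_X}=m_{Y_X}$ lemma entirely, is shorter, and yields the slightly stronger intermediate fact that the suprema are universal constants rather than merely mutually comparable. One small remark that applies to both your proof and the paper's: the first inequality in~\eqref{E:key_inequality2} as stated (with no constant) is a touch optimistic, since passing from $\alpha\chi_{(a/2,a)}$ to the dominating $\alpha\chi_{(0,a)}\in\F$ inflates the $\Lambda_X$-norm by the factor $\fundX(a)/\fundX(a/2)\in[1,2]$; you flag this explicitly, and it is harmless for the application in \cref{thm:small_support_big_norm}, where only the two-sided comparison is used.
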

\begin{proof}
The first inequality in \eqref{E:key_inequality2} can be readily verified. Indeed, if $h = \alpha \chi_{(a/2, a)} \in \I_{a/2,a}$ is such that $\|h\|_{\Lambda_X(0,1)}\leq1$, then
\begin{align*}
\Big\| \int_t^1 h(s) s^{-1 + \frac{m}{n}} \dd{s} \Big\|_{M_{\optTar{X}}(0,1)} &\leq \Big\| \int_t^1 \alpha \chi_{(0, a)}(s)  s^{-1 + \frac{m}{n}} \dd{s} \Big\|_{M_{\optTar{X}}(0,1)} \\
&\leq\sup_{\substack{\|f\|_{\Lambda_X(0,1)}\leq1 \\ f\in\F}} \Big\| \int_t^1 f(s) s^{-1 + \frac{m}{n}} \dd{s} \Big\|_{M_{\optTar{X}}(0,1)}.
\end{align*}
Therefore, we only need to prove the second inequality in \eqref{E:key_inequality2} with a constant $C$ independent of $a\in(0,1]$. Let $f = \sum_{j = 1}^M \alpha_j \chi_{(0, b_j)}\in\F$ be such that $\|f\|_{\Lambda_X(0,1)}\leq1$. Set $b_0 = 0$. Using \eqref{E:H_on_simple}, \eqref{E:m_and_M_coincide_for_optimal_tar:1}, and the monotonicity of $\fundOptX$, we have
\begin{align*}
\Big\| \int_t^1 f(s) s^{-1 + \frac{m}{n}} \dd{s} \Big\|_{M_{\optTar{X}}(0,1)} &\approx \Big\| \sum_{j = 1}^M \alpha_j b_j^{\frac{m}{n}}\chi_{(0, b_j)} \Big\|_{M_{\optTar{X}}(0,1)} \\
&\approx \Big\| \sum_{j = 1}^M \alpha_j b_j^{\frac{m}{n}}\chi_{(0, b_j)} \Big\|_{m_{\optTar{X}}(0,1)} \\
&= \sup_{t\in(0,1)} \fundOptX(t) \sum_{j = 1}^M \alpha_j b_j^{\frac{m}{n}}\chi_{(0, b_j)}(t) \\
&= \max_{k = 1, \dots, M} \sup_{t\in(b_{k-1},b_k)} \fundOptX(t) \sum_{j = 1}^M \alpha_j b_j^{\frac{m}{n}}\chi_{(0, b_j)}(t) \\
&= \max_{k = 1, \dots, M} \fundOptX(b_k) \sum_{j = k}^M \alpha_j b_j^{\frac{m}{n}}.
\end{align*}
Let $k\in\{1,\dots, M\}$ be an index where the maximum is attained. Hence
\begin{equation}\label{E:1}
\Big\| \int_t^1 f(s) s^{-1 + \frac{m}{n}} \dd{s} \Big\|_{M_{\optTar{X}}(0,1)} \approx \fundOptX(b_k) \sum_{j = k}^M \alpha_j b_j^{\frac{m}{n}}.
\end{equation}
Let $a\in(0,1]$. Set
\begin{align*}
g &= \Big( \frac{\fundOptX(b_k)}{\fundOptX(a)}a^{-\frac{m}{n}} \sum_{j = k}^M \alpha_j b_j^{\frac{m}{n}} \Big) \chi_{(0,a)} \\
\intertext{and}
h &= g\chi_{(a/2, a)}.
\end{align*}
Note that
\begin{equation}\label{E:2}
h\in\I_{a/2,a}.
\end{equation}
Now, using \eqref{E:H_on_simple}, \eqref{E:m_and_M_coincide_for_optimal_tar:1}, and \eqref{E:1}, we see that
\begin{align}
\Big\| \int_t^1 g(s) s^{-1 + \frac{m}{n}} \dd{s} \Big\|_{M_{\optTar{X}}(0,1)} &\approx \Big\| \Big( \frac{\fundOptX(b_k)}{\fundOptX(a)}a^{-\frac{m}{n}} \sum_{j = k}^M \alpha_j b_j^{\frac{m}{n}} \Big) a^{\frac{m}{n}} \chi_{(0,a)} \Big\|_{M_{\optTar{X}}(0,1)} \nonumber\\
&\approx \Big\| \Big( \frac{\fundOptX(b_k)}{\fundOptX(a)} \sum_{j = k}^M \alpha_j b_j^{\frac{m}{n}} \Big) \chi_{(0,a)} \Big\|_{m_{\optTar{X}}(0,1)} \nonumber\\
&= \fundOptX(b_k) \sum_{j = k}^M \alpha_j b_j^{\frac{m}{n}} \nonumber\\
&\approx \Big\| \int_t^1 f(s) s^{-1 + \frac{m}{n}} \dd{s} \Big\|_{M_{\optTar{X}}(0,1)}, \label{E:3}
\end{align}
in which the multiplicative constants are independent of $a$. Furthermore, using the quasiconcavity of $\fundOptX$, we have
\begin{align*}
\Big\| \int_t^1 h(s) s^{-1 + \frac{m}{n}} \dd{s} \Big\|_{M_{\optTar{X}}(0,1)} &\geq \Big\| \chi_{(0, a/2)}(t) \int_t^1 h(s) s^{-1 + \frac{m}{n}} \dd{s} \Big\|_{M_{\optTar{X}}(0,1)} \\
&= \Big( \frac{\fundOptX(b_k)}{\fundOptX(a)}a^{-\frac{m}{n}} \sum_{j = k}^M \alpha_j b_j^{\frac{m}{n}} \Big) \Big\| \chi_{(0, a/2)}(t) \int_{a/2}^a s^{-1 + \frac{m}{n}} \dd{s} \Big\|_{M_{\optTar{X}}(0,1)} \\
&\approx \Big( \frac{\fundOptX(b_k)}{\fundOptX(a)}a^{-\frac{m}{n}} \sum_{j = k}^M \alpha_j b_j^{\frac{m}{n}} \Big) a^\frac{m}{n}\fundOptX\Big( \frac{a}{2} \Big) \\
&\geq \frac1{2} \Big( \frac{\fundOptX(b_k)}{\fundOptX(a)} \sum_{j = k}^M \alpha_j b_j^{\frac{m}{n}} \Big) \fundOptX(a) \\
&= \frac1{2} \Big\|\Big( \frac{\fundOptX(b_k)}{\fundOptX(a)}a^{-\frac{m}{n}} \sum_{j = k}^M \alpha_j b_j^{\frac{m}{n}} \Big) a^{\frac{m}{n}} \chi_{(0,a)} \Big\|_{M_{\optTar{X}}(0,1)}.
\end{align*}
Combining this and \eqref{E:3}, we obtain
\begin{equation}\label{E:6}
\Big\| \int_t^1 g(s) s^{-1 + \frac{m}{n}} \dd{s} \Big\|_{M_{\optTar{X}}(0,1)} \lesssim \Big\| \int_t^1 h(s) s^{-1 + \frac{m}{n}} \dd{s} \Big\|_{M_{\optTar{X}}(0,1)}.
\end{equation}

Next, using \eqref{E:sublimiting} twice and the fact that $b_j\geq b_k$ for every $j\in\{k,\dots, M\}$, we arrive at
\begin{align}
\|h\|_{\Lambda_X(0,1)} \leq \|g\|_{\Lambda_X(0,1)} &= \Big( \frac{\fundOptX(b_k)}{\fundOptX(a)}a^{-\frac{m}{n}} \sum_{j = k}^M \alpha_j b_j^{\frac{m}{n}} \Big) \fundX(a) \nonumber\\
&\approx \fundOptX(b_k) \sum_{j = k}^M \alpha_j b_j^{\frac{m}{n}} \nonumber\\
&\approx \fundOptX(b_k) \sum_{j = k}^M \alpha_j \frac{\fundX(b_j)}{\fundOptX(b_j)} \nonumber\\
&\leq \sum_{j = k}^M \alpha_j \fundX(b_j) = \Big\| \sum_{j = k}^M \alpha_j \chi_{(0, b_j)} \Big\|_{\Lambda_X(0,1)} \nonumber\\
&\leq \|f\|_{\Lambda_X(0,1)} \leq 1. \label{E:4}
\end{align}

Finally, combining \eqref{E:2}, \eqref{E:3}, \eqref{E:6}, and \eqref{E:4}, we obtain \eqref{E:key_inequality2} with a constant $C$ independent of $a$.
\end{proof}

We are finally in a position to prove the main result of this section.
\begin{theorem}\label{thm:small_support_big_norm}
Let $1\leq m < n$. Let $\|\cdot\|_{X(0,1)}$ be an r.i.\ function norm that is sublimiting. Let $\|\cdot\|_{Y(0,1)}$ be an r.i.\ function norm such that
\begin{equation}\label{E:target_between_optimal_and_Marc}
\optTar{X}(0,1) \hookrightarrow Y(0,1) \hookrightarrow M_{\optTar{X}}(0,1).
\end{equation}
Let $B(x_0, R) \subseteq \Omega$ be a ball inside $\Omega$. There are constants $C_1$ and $C_2$ such that for every $a\in(0,1)$ there are $f\in\I_{a/2,a}$ and a function $u_f\in\VmXOm\cap \mathcal C^{m-1}(\Omega\setminus\{x_0\})$ such that
\begin{align}
\|u_f\|_{\VmXOm} &\leq C_1 \|f\|_{X(0,1)} = C_1 \label{thm:small_support_big_norm:E:controlled_Sobolev_norm},\\
0 < C_2 &\leq \|u_f\|_{Y(\Omega)} < \infty \label{thm:small_support_big_norm:E:big_Y_norm},
\end{align}
and
\begin{equation}\label{thm:small_support_big_norm:E:support_of_uf}
\text{the support of $u_f$ is the closed ball $\bar{B}(x_0, a^\frac1{n}R)$}.
\end{equation}
Furthermore, if $\sum_{k = 1}^M \alpha_j u_{f_j}$ is a linear combination of such functions, we have
\begin{equation}\label{thm:small_support_big_norm:E:controlled_Sobolev_norm_of_linear_comb}
\Big\| \sum_{k = 1}^M \alpha_j u_{f_j} \Big\|_{\VmXOm} \leq C_1 \Big\| \sum_{k = 1}^M \alpha_j f_j \Big\|_{X(0,1)}.
\end{equation}
\end{theorem}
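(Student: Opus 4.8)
The construction of $u_f$ from $f$ should be completely explicit: given $f = \alpha\chi_{(a/2,a)}\in\I_{a/2,a}$, I will build $u_f$ as a radial function on $B(x_0,R)$ whose gradient profile, after transplanting to $(0,1)$ via the measure-preserving change of variables $t \mapsto |B(x_0, (t/\omega_n)^{1/n})|$ (normalized so $|\Omega|=1$), reproduces $f$. Concretely, by the Pólya–Szegő/Talenti machinery underlying \cite{KP:06}, the optimal target norm behaves like the operator $g\mapsto \int_t^1 g(s)s^{-1+m/n}\dd s$ acting on the rearrangement of $|\nabla^m u|$; so I want $u_f$ radial with $(|\nabla^m u_f|)^*(t) \approx f(t)$ and $u_f^*(t)\approx \int_t^{a} f(s)s^{-1+m/n}\dd s$ for $t < a$, and $u_f \equiv 0$ for $t\ge a$. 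Since $f$ is a single bump on $(a/2,a)$, this makes $u_f$ a smooth (indeed $\mathcal C^{m-1}$ away from $x_0$) radial spike supported exactly on $\bar B(x_0, a^{1/n}R)$ — giving \eqref{thm:small_support_big_norm:E:support_of_uf} — and $m$-times weakly differentiable with zero continuation, as required for membership in $\VmXOm$.

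First I would set up the radial ansatz and record the two norm identities: $\|u_f\|_{\VmXOm} = \||\nabla^m u_f|\|_{X(\Omega)} \approx \|f\|_{X(0,1)}$ (this is \eqref{thm:small_support_big_norm:E:controlled_Sobolev_norm}, with $\|f\|_{X(0,1)}=1$ by the normalization $\|f\|_{\Lambda_X}\le 1$ combined with sublimitingness forcing $\|f\|_{X}\approx\|f\|_{\Lambda_X}$ on single bumps, via $\varphi_X(a)\approx\varphi_X(a/2)$ and concavity), and $\|u_f\|_{Y(\Omega)} \approx \|\int_t^1 f(s)s^{-1+m/n}\dd s\|_{Y(0,1)}$ for the target side. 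Then, to get \eqref{thm:small_support_big_norm:E:big_Y_norm}, I would choose $f$ — for the given $a$ — to be precisely the extremizer (up to constants) in the right-hand supremum of \cref{prop:cutting_of_support}: pick $h\in\I_{a/2,a}$ with $\|h\|_{\Lambda_X}\le 1$ nearly maximizing $\|\int_t^1 h(s)s^{-1+m/n}\dd s\|_{M_{Y_X}(0,1)}$, and set $f=h$. By \cref{prop:cutting_of_support} this quantity is $\approx \sup_{\|f'\|_{\Lambda_X}\le1,\,f'\in\F}\|\cdot\|_{M_{Y_X}}$, which is bounded below by a constant independent of $a$ because it dominates (taking $f' = \chi_{(0,1)}/\varphi_X(1)\in\F$, or any fixed competitor) a fixed positive number; the upper bound $\|u_f\|_{Y(\Omega)}<\infty$ is immediate from $Y\hookrightarrow M_{Y_X}$ and finiteness of the $M_{Y_X}$-norm of a bounded compactly-supported profile. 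The sandwich \eqref{E:target_between_optimal_and_Marc} is used here: $Y\hookrightarrow M_{Y_X}$ for the upper bound, and $Y_X\hookrightarrow Y$ together with $\|u_f\|_{Y_X(\Omega)}\approx\|\int_t^1 f s^{-1+m/n}\|_{Y_X(0,1)}\gtrsim$ (the $M_{Y_X}$-norm, by $m_{Y_X}=M_{Y_X}$ from the Proposition in the excerpt, hence $\gtrsim$ a constant) for the lower bound — so both ends of \eqref{thm:small_support_big_norm:E:big_Y_norm} are controlled.

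Finally, for the linear-combination estimate \eqref{thm:small_support_big_norm:E:controlled_Sobolev_norm_of_linear_comb}: the functions $u_{f_j}$ are radial spikes about the common center $x_0$ with nested supports $\bar B(x_0, a_j^{1/n}R)$; transplanting to $(0,1)$, a linear combination $\sum_j\alpha_j u_{f_j}$ has gradient whose rearrangement is controlled by $\sum_j|\alpha_j|(|\nabla^m u_{f_j}|)^*\approx\sum_j|\alpha_j| f_j$ pointwise in the transplanted variable (because the bumps $f_j$ live on disjoint-or-nested dyadic-type annuli and, up to a fixed constant from \eqref{prel:pointwise_inequality_sum_of_rearrangements}-type overlap bookkeeping, add up in order), and then property (P2) of the \riNorm{} gives $\|\sum\alpha_j u_{f_j}\|_{\VmXOm}\lesssim\|\sum|\alpha_j| f_j\|_{X(0,1)} = \|\sum\alpha_j f_j\|_{X(0,1)}$ since the $f_j$ are nonnegative. \textbf{The main obstacle} I anticipate is making the radial construction genuinely produce a function in $\VmXOm$ — i.e., verifying that the prescribed radial profile is $m$-times weakly differentiable with vanishing extension across $\partial\Omega$ and that \emph{all} $m$th-order derivatives (not just the radial one) have rearrangement $\approx f$, which requires the standard but slightly delicate chain-rule/Faà di Bruno bookkeeping for higher-order derivatives of radial functions near the center $x_0$ and the gluing to make $u_f\equiv 0$ smoothly at radius $a^{1/n}R$; and, relatedly, controlling the gradient rearrangement of the \emph{sum} uniformly in $M$ and the $\alpha_j$'s, which is where the precise choice of supports on a fixed dyadic scale (forced by $f_j\in\I_{a_j/2,a_j}$) is essential.
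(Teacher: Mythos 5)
Your overall strategy matches the paper's: use \cref{prop:cutting_of_support} to locate a near-extremizing spike profile $f\in\I_{a/2,a}$, transplant it to a radial function $u_f$ on $B(x_0,R)$ via the Kerman--Pick construction, and then read off the $\VmXOm$-bound from $\|\nabla^m u_f\|_{X}\lesssim\|f\|_{X}$ and the $Y$-lower bound from the sandwich \eqref{E:target_between_optimal_and_Marc}. Your observation that $\widetilde C_2>0$ by testing against a fixed competitor such as $\chi_{(0,1)}/\varphi_X(1)\in\F$ is correct and is the implicit reason the paper's $\widetilde C_2$ is positive. Two points are off, one cosmetic, one substantive.

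\emph{The cosmetic one.} You assign the two embeddings in \eqref{E:target_between_optimal_and_Marc} to the wrong halves of \eqref{thm:small_support_big_norm:E:big_Y_norm}. The left embedding $\optTar{X}\hookrightarrow Y$ gives $\|u_f\|_{Y(\Omega)}\lesssim\|u_f\|_{\optTar X(\Omega)}\lesssim\|u_f\|_{\VmXOm}<\infty$, i.e.\ the \emph{finiteness}. The right embedding $Y\hookrightarrow M_{\optTar X}$ gives $\|u_f\|_{Y(\Omega)}\gtrsim\|u_f\|_{M_{\optTar X}(\Omega)}\gtrsim\widetilde C_2$, i.e.\ the \emph{lower bound}. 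You have them reversed in your writeup.

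\emph{The substantive one.} Your planned proof of the linear-combination estimate \eqref{thm:small_support_big_norm:E:controlled_Sobolev_norm_of_linear_comb} via rearrangement bookkeeping on $\nabla^m\sum_j\alpha_j u_{f_j}$ has a real gap, and you half-recognize it. It requires the $m$th-order derivatives $\nabla^m u_{f_j}$ to have pairwise disjoint supports so that $\big|\sum_j\alpha_j\nabla^m u_{f_j}\big|$ is equimeasurable (up to a fixed constant) with $\sum_j|\alpha_j|f_j$. That disjointness is only guaranteed when $m=1$ and the intervals $(a_j/2,a_j)$ are themselves disjoint---the theorem statement imposes neither, and for $m\geq 2$ the radial profile supporting the $m$th derivatives is not obviously localized in an annulus. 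Moreover the fallback you invoke, iterating \eqref{prel:pointwise_inequality_sum_of_rearrangements}, produces an $M$-dependent constant. The correct and much simpler route, which the paper uses, is that the construction $f\mapsto u_f$ is \emph{linear}: $\sum_j\alpha_j u_{f_j}=u_{\sum_j\alpha_j f_j}$. Once you have this (and it is built into the KP:06-type construction you are reproducing), \eqref{thm:small_support_big_norm:E:controlled_Sobolev_norm_of_linear_comb} is just \eqref{thm:small_support_big_norm:E:controlled_Sobolev_norm} applied to the single profile $\sum_j\alpha_j f_j$, with no disjointness hypothesis, no Fa\`a di Bruno bookkeeping, and no $M$-dependence. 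You should make the linearity of $f\mapsto u_f$ one of the recorded properties of your radial ansatz rather than trying to re-derive the estimate on each linear combination separately.
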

\begin{proof}
Without loss of generality, we may assume that $\fundX$ is concave. Set
\begin{equation*}
\widetilde{C_2} = \sup_{\|f\|_{\Lambda_X(0,1)}\leq1} \Big\| \int_t^1 f^*(s) s^{-1 + \frac{m}{n}} \dd{s} \Big\|_{M_{\optTar{X}}(0,1)}.
\end{equation*}
Since every nonnegative nonincreasing function on $(0,1)$ is an a.e.~pointwise limit of a nondecreasing sequence of nonnegative nonincreasing simple functions on $(0,1)$, it is easy to see that
\begin{equation*}
\widetilde{C_2} = \sup_{\substack{\|f\|_{\Lambda_X(0,1)}\leq1\\ f\in\F}} \Big\| \int_t^1 f(s) s^{-1 + \frac{m}{n}} \dd{s} \Big\|_{M_{\optTar{X}}(0,1)}.
\end{equation*}
Furthermore, we have
\begin{equation}\label{thm:small_support_big_norm:E:8}
\widetilde{C_2} \approx \sup_{\substack{\|f\|_{\Lambda_X(0,1)}\leq1\\ f\in\I_{a/2,a}}} \Big\| \int_t^1 f(s) s^{-1 + \frac{m}{n}} \dd{s} \Big\|_{M_{\optTar{X}}(0,1)} \quad \text{for every $a\in(0,1]$}
\end{equation}
thanks to \cref{prop:cutting_of_support}. By \cite[Theorem~A]{KP:06} combined with \eqref{E:target_between_optimal_and_Marc}, we have
\begin{equation}\label{thm:small_support_big_norm:E:9}
\sup_{\|f\|_{X(0,1)}\leq1} \Big\| \int_t^1 f(s) s^{-1 + \frac{m}{n}} \dd{s} \Big\|_{Y(0,1)} < \infty.
\end{equation}
Hence, combining \eqref{thm:small_support_big_norm:E:8}, \eqref{thm:small_support_big_norm:E:9}, \eqref{E:target_between_optimal_and_Marc}, \eqref{prel:fundamental_endpoint_embedding_LorLam_to_X}, and \eqref{prel:fundamental_endpoint_embedding_into_M}, we obtain
\begin{equation}\label{thm:small_support_big_norm:E:3}
\widetilde{C_2} \lesssim \sup_{\substack{\|f\|_{X(0,1)}\leq1\\ f\in\I_{a/2,a}}} \Big\| \int_t^1 f(s) s^{-1 + \frac{m}{n}} \dd{s} \Big\|_{Y(0,1)} < \infty \quad \text{for every $a\in(0,1]$}.
\end{equation}

Next, let $B(x_0, R) \subseteq \Omega$ be a ball inside $\Omega$. By following and suitably modifying the construction from \cite[p.~562]{KP:06} (cf.~\cite[Proposition~4.5]{M:25preprint}), we can construct for every $f\in L^\infty(0,1)$ that has support in $[0,\delta]$ for some $\delta\in(0,1)$ functions $u_f$ and $u_{|f|}$ such that both $u_f$ and $u_{|f|}$ belong to $\VmXOm\cap \mathcal C^{m-1}(\Omega\setminus\{x_0\})$, $u_{|f|}$ is radially nonincreasing with respect to $x_0$, $|u_f|\leq u_{|f|}$,
\begin{align}
\|u_f\|_{\VmXOm} &\leq C_1 \|f\|_{X(0,1)}, \label{thm:small_support_big_norm:E:1}\\
\Big\| \int_t^1 |f(s)| s^{-1+\frac{m}{n}} \dd{s} \Big\|_{Y(0,1)} &\leq C \|u_{|f|}\|_{Y(\Omega)} < \infty, \label{thm:small_support_big_norm:E:2}
\end{align}
and
\begin{equation}\label{thm:small_support_big_norm:E:6}
\text{the support of $u_{|f|}$ is the closed ball $\bar{B}(x_0, \varrho^\frac1{n}R)$},
\end{equation}
where
\begin{equation*}
\varrho = \esssup\{t\in(0,1): |f(t)| > 0 \},
\end{equation*}
and in which the constants $C_1$ and $C$ depend only on $m$, $n$, and $R$ (in particular, they are independent of $\delta$ and $f$). The fact that $u_{|f|}\in Y(\Omega)$ follows from \eqref{E:target_between_optimal_and_Marc}. Moreover, the assignment $f\mapsto u_f$ is linear, i.e., if $\sum_{k = 1}^M \alpha_j f_j$ is a linear combination of functions from $L^\infty(0,1)$ having supports in $[0, \delta]$ for some $\delta\in(0,1)$, then
\begin{equation}\label{thm:small_support_big_norm:E:7}
\sum_{k = 1}^M \alpha_j u_{f_j} = u_{\sum_{k = 1}^M \alpha_j f_j}.
\end{equation}

Now, let $a\in(0,1)$. Using \eqref{thm:small_support_big_norm:E:3}, we obtain $f\in\I_{a/2,a}$ such that
\begin{equation}\label{thm:small_support_big_norm:E:4}
\|f\|_{X(0,1)} = 1
\end{equation}
and
\begin{equation}\label{thm:small_support_big_norm:E:5}
\widetilde{C_2} \lesssim \Big\| \int_t^1 f(s) s^{-1 + \frac{m}{n}} \dd{s} \Big\|_{Y(0,1)}.
\end{equation}
Let $u_f\in \VmXOm$ be the function from \eqref{thm:small_support_big_norm:E:1}--\eqref{thm:small_support_big_norm:E:6}. Note that $f = |f|$, and so $u_f = u_{|f|}$. Since $f\in\I_{a/2,a}$, it follows from \eqref{thm:small_support_big_norm:E:6} that \eqref{thm:small_support_big_norm:E:support_of_uf} is true. By combing \eqref{thm:small_support_big_norm:E:4} and \eqref{thm:small_support_big_norm:E:1}, we obtain \eqref{thm:small_support_big_norm:E:controlled_Sobolev_norm}. Furthermore, using \eqref{thm:small_support_big_norm:E:5} and \eqref{thm:small_support_big_norm:E:2}, we see that \eqref{thm:small_support_big_norm:E:big_Y_norm} is also valid with a constant $C_2$ independent of $a$.

At last, if $\sum_{k = 1}^M \alpha_j u_{f_j}$ is a linear combination, where $f_j\in\I_{a_j/2,a_j}$ for some $a_j\in(0,1)$, then
\begin{equation*}
\Big\| \sum_{k = 1}^M \alpha_j u_{f_j} \Big\|_{\VmXOm} = \Big\| u_{\sum_{k = 1}^M \alpha_j f_j} \Big\|_{\VmXOm} \leq C_1 \Big\| \sum_{k = 1}^M \alpha_j f_j \Big\|_{X(0,1)}
\end{equation*}
thanks to \eqref{thm:small_support_big_norm:E:7} and \eqref{thm:small_support_big_norm:E:1}, whence \eqref{thm:small_support_big_norm:E:controlled_Sobolev_norm_of_linear_comb} follows.
\end{proof}

\begin{remark}
When $m=1$, the function $u_f$ associated to $f \in\I_{a/2,a}$ from the preceding theorem can be constructed in such a way that $u_f$ is constant in the ball $B(x_0, (a/2)^{1/n}R)$ and
\begin{equation*}
\nabla u_f(x) = \frac{n}{R} f\Big( \frac{|x-x_0|^n}{R^n} \Big)  \frac{x-x_0}{|x-x_0|}\chi_{B(x_0,R)}(x) \quad \text{for a.e.~$x\in\Omega$}.
\end{equation*}
In particular, if $\{(a_j/2, a_j)\}_{j=1}^M\subseteq(0,1)$ are mutually disjoint intervals, then the supports of $\{\nabla u_{f_j}\}_{j=1}^M$, where $f_j\in\I_{a_j/2,a_j}$, are also mutually disjoint.
\end{remark}

\section{Optimal embeddings of ``nonlimiting'' \texorpdfstring{$\Lambda$}{Lambda} spaces and their non-strict singularity} \label{Section4}
We start with an auxiliary  proposition of independent interest, which will be an essential ingredient in the proof of our main result, \cref{thm:optimal_embedding_Lambda_spaces_not_SS}. Loosely speaking, it tells us that when $X$ is a Lambda space, the optimal r.i.\ space in Sobolev embeddings \eqref{prel:optimal_Sob_emb} that are not (super)limiting is a Lambda space of the same type. The price for the generality of \cref{prop:optimal_for_Lambda_is_Lambda} is that its assumptions are unavoidably technical. Nevertheless, it is usually not difficult to verify them (see~\cref{rem:optimal_for_Lambda_is_Lambda}).
\begin{proposition}\label{prop:optimal_for_Lambda_is_Lambda}
Let $1\leq m <n$. Let $q\in[1, \infty)$ and let $w\in\Mpl(0,1)$ be a weight satisfying \eqref{prel:when_Lambda_equiv_ri}. Furthermore, assume that
\begin{align}
\sup_{t\in(0,1)} \frac{t^{\frac{m}{n}}}{W(t)} &= \infty \quad \text{if $q = 1$}, \label{prop:optimal_for_Lambda_is_Lambda:eq:not_limiting_q=1} \\
\int_0^1 \frac{w(t)}{W(t)^{q'}}t^{\frac{m}{n}q'} \dd{t} &= \infty \quad \text{if $q\in (1 ,\infty)$}. \label{prop:optimal_for_Lambda_is_Lambda:eq:not_limiting_q_bigger_1}
\end{align}
When $q\in(1,\infty)$, assume in addition that
\begin{align}
\int_0^{t/2} \frac{w(s)}{W(s)^{q'}} s^{q'} \dd{s} &\lesssim \int_{t/2}^{t} \frac{w(s)}{W(s)^{q'}} s^{q'} \dd{s} \label{prop:optimal_for_Lambda_is_Lambda:eq:q_bigger_1_not_close_L1}\\
\intertext{and}
\int_0^t \Big(\int_0^s \frac{w(\tau)}{W(\tau)^{q'}} \tau^{q'} \dd{\tau} \Big)^{1-q} s^{q-1} \dd {s} &\lesssim t^q \Big(\int_0^t \frac{w(s)}{W(s)^{q'}} s^{q'} \dd{s} \Big)^{1-q} \label{prop:optimal_for_Lambda_is_Lambda:eq:q_bigger_1_not_close_Linfty}
\end{align}
for every $t\in(0,1)$. Then there is a weight $w_{opt}$ such that the optimal target r.i.\ space $Y_X(\Omega)$ for $X(\Omega) = \Lambda^q_w(\Omega)$ in the Sobolev embedding \eqref{prel:optimal_Sob_emb} satisfies
\begin{equation}\label{prop:optimal_for_Lambda_is_Lambda:eq:optimal_is_Lambda}
Y_X(\Omega) = \Lambda^q_{w_{opt}}(\Omega).
\end{equation}
\end{proposition}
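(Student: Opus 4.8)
The plan is to identify $Y_X$ through its associate space, for which \eqref{prel:optimal_asoc_norm} provides the explicit formula
\begin{equation*}
\|g\|_{Y_X'(0,1)} = \bigl\|t^{\frac mn}g^{**}(t)\bigr\|_{(\Lambda^q_w)'(0,1)},\qquad g\in\M(0,1).
\end{equation*}
So the first step is to recall the description of the associate space of a Lambda space. When $q=1$, \eqref{prel:when_Lambda_equiv_ri} makes $W$ equivalent to a concave function, so $\Lambda^1_w=\Lambda_W$ and, by \eqref{prel:asoc_of_endpoint_Lorentz_is_M}, $(\Lambda^1_w)'(0,1)=M_\psi(0,1)$ with $\psi(t)=t/W(t)$. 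When $q\in(1,\infty)$, the duality principle for the cone of nonincreasing functions (see \cite{S:90}, cf.~\cite{CGS:96}) gives
\begin{equation*}
\|h\|_{(\Lambda^q_w)'(0,1)} \approx \Biggl(\int_0^1\bigl(t\,h^{**}(t)\bigr)^{q'}\frac{w(t)}{W(t)^{q'}}\dd{t}\Biggr)^{\frac1{q'}} + \frac1{W(1)^{1/q}}\int_0^1 h^*(t)\dd{t},
\end{equation*}
i.e.\ $(\Lambda^q_w)'$ is equivalent to a Lorentz $\Gamma$-type space whose weight is governed by $v(t)=t^{q'}w(t)/W(t)^{q'}$, with primitive $V(t)=\int_0^t v(s)\dd{s}$.

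The second --- and main --- step is to plug $h(t)=t^{m/n}g^{**}(t)$ into these formulas and simplify. The elementary identity
\begin{equation*}
\int_0^s t^{\frac mn}g^{**}(t)\dd{t} \approx s^{\frac mn}\int_0^s g^*(t)\dd{t} = s^{1+\frac mn}g^{**}(s)\qquad(s\in(0,1)),
\end{equation*}
obtained via Fubini's theorem and the monotonicity of $g^*$, immediately yields the lower bounds and indicates that composing with the fractional averaging $t\mapsto t^{m/n}g^{**}(t)$ should amount, on the relevant part of the interval, to replacing $W$ by a weight $w_{opt}$ whose primitive satisfies $W_{opt}(t)\approx t^{-m/n}W(t)$ when $q=1$, and $W_{opt}(t)^{1-q'}\approx\int_t^1 s^{\frac mn q'}w(s)W(s)^{-q'}\dd{s}$ when $q\in(1,\infty)$; that is, $Y_X=\Lambda^q_{w_{opt}}$. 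To turn this into a proof one must: (i) establish the matching \emph{upper} bounds, which requires controlling the nonincreasing rearrangement $\bigl(t^{m/n}g^{**}(t)\bigr)^*$ of the \emph{non-monotone} function $t\mapsto t^{m/n}g^{**}(t)$ and --- when $q>1$ --- a further maximal function of it; (ii) rule out the degenerate case $Y_X=L^\infty$, which is not a Lambda space: by \cite[Theorem~A]{KP:06} and \cite[Theorem~5.2]{S:12} (cf.~\cite{S:81}) one has $Y_X=L^\infty$ precisely when $X(0,1)\hookrightarrow L^{n/m,1}(0,1)$, and \eqref{prop:optimal_for_Lambda_is_Lambda:eq:not_limiting_q=1}, resp.~\eqref{prop:optimal_for_Lambda_is_Lambda:eq:not_limiting_q_bigger_1}, is exactly the negation of this; and (iii) check that $w_{opt}$ itself satisfies \eqref{prel:when_Lambda_equiv_ri}, so that $\Lambda^q_{w_{opt}}$ really is an \riSp{} and \eqref{prop:optimal_for_Lambda_is_Lambda:eq:optimal_is_Lambda} is meaningful. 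The technical hypotheses \eqref{prop:optimal_for_Lambda_is_Lambda:eq:q_bigger_1_not_close_L1} and \eqref{prop:optimal_for_Lambda_is_Lambda:eq:q_bigger_1_not_close_Linfty} are precisely what is consumed in (i) and (iii): \eqref{prop:optimal_for_Lambda_is_Lambda:eq:q_bigger_1_not_close_L1} is a reverse-doubling of $V$ that keeps the iterated averaging under control, while \eqref{prop:optimal_for_Lambda_is_Lambda:eq:q_bigger_1_not_close_Linfty} is nothing but the incarnation of \eqref{prel:when_Lambda_equiv_ri} for the weight producing $\Lambda^q_{w_{opt}}$. Once $Y_X'(0,1)=(\Lambda^q_{w_{opt}})'(0,1)$ is in hand, passing to associate spaces and using $X''=X$ gives \eqref{prop:optimal_for_Lambda_is_Lambda:eq:optimal_is_Lambda}.

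I expect step (i) to be the main obstacle. The functional $g\mapsto\bigl\|t^{m/n}g^{**}(t)\bigr\|_{(\Lambda^q_w)'(0,1)}$ involves, a priori, the nonincreasing rearrangement of a non-monotone function and --- for $q>1$ --- a maximal function stacked on top of that, so no weight can simply be read off; showing that this double layer of rearrangements and averagings collapses to an honest Lambda norm with the correct $L^1/L^\infty$ endpoint behaviour is the crux, and it is here that the otherwise opaque assumptions of the proposition earn their keep. I would organise the argument into the two cases $q=1$ (where the Marcinkiewicz description makes matters essentially direct: the diagonal lower bound already matches $M_{\varphi_{Y_X'}}$, once \eqref{prop:optimal_for_Lambda_is_Lambda:eq:not_limiting_q=1} is used to secure the requisite geometric regularity of $W$) and $q\in(1,\infty)$ (which carries the weight of the proof).
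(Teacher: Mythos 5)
Your overall plan is the same as the paper's: pass to associate spaces via \eqref{prel:optimal_asoc_norm}, identify $(\Lambda^q_w)'$ (the Marcinkiewicz endpoint $M_{t/W(t)}$ for $q=1$, a $\Gamma$-type space with the weight $v(t)=t^{q'}w(t)/W(t)^{q'}$ for $q>1$), substitute $h(t)=t^{m/n}g^{**}(t)$, and recover a Lambda space; you also correctly isolate the degenerate case $Y_X=L^\infty$ and explain why \eqref{prop:optimal_for_Lambda_is_Lambda:eq:not_limiting_q=1}/\eqref{prop:optimal_for_Lambda_is_Lambda:eq:not_limiting_q_bigger_1} exclude it. But you have correctly flagged --- and then not closed --- the genuine gap: your step~(i). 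The lower bound you extract from $\int_0^s t^{m/n}g^{**}(t)\dd{t}\approx s^{1+m/n}g^{**}(s)$ is fine, but the matching upper bound, which is what turns $g\mapsto\|t^{m/n}g^{**}(t)\|_{(\Lambda^q_w)'}$ into an honest Lambda norm, is not proved, and it does not follow from any pointwise inequality (the function $t\mapsto t^{m/n}g^{**}(t)$ is not monotone, and its maximal function can dominate $s^{m/n}g^{**}(s)$ near $0$). The paper resolves this precisely through a boundedness result for the supremal operator $g\mapsto\sup_{t\le s\le1}s^{m/n}g^{**}(s)$: for $q>1$, after using Sawyer duality plus \cite[p.~148 and Theorem~4]{S:90} (with \eqref{prop:optimal_for_Lambda_is_Lambda:eq:q_bigger_1_not_close_Linfty} read as \eqref{prel:when_Lambda_equiv_ri} for $\Lambda^{q'}_v$) to get $X'=\Lambda^{q'}_v$, it invokes \cite[Lemma~4.10]{EMMP:20} for the lower bound and \cite[Theorem~3.2]{GOP:06}, whose hypothesis is verified from \eqref{prop:optimal_for_Lambda_is_Lambda:eq:q_bigger_1_not_close_L1} and the $\Delta_2$ property \eqref{prel:Lambda_delta2} of $V$, for the upper bound; for $q=1$ the analogous collapse is the identity $M_{t/W(t)}=m_{t/W(t)}$, again closed with \cite[Lemma~4.10]{EMMP:20}.

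Two smaller points. First, you attribute \eqref{prop:optimal_for_Lambda_is_Lambda:eq:q_bigger_1_not_close_Linfty} to ``\eqref{prel:when_Lambda_equiv_ri} for the weight producing $\Lambda^q_{w_{opt}}$''; in the paper it is instead used as \eqref{prel:when_Lambda_equiv_ri} for the \emph{associate} weight $v$, i.e.\ to ensure $(\Lambda^q_w)'=\Lambda^{q'}_v$ is itself (equivalent to) an r.i.\ Lambda space --- a prerequisite for the computation, not a final consistency check on $w_{opt}$. Second, ``pass to $X''=X$'' is the right idea for undoing the duality, but it does not by itself hand you a Lambda space: the paper has to put $\|\cdot\|_{Y_X'(0,1)}$ into the specific form $\|t^{m/n}v(t)^{1/q'}g^{**}(t)\|_{L^{q'}}$ and then cite \cite[Theorem~2.7]{EKP:00} (using \eqref{prop:optimal_for_Lambda_is_Lambda:eq:not_limiting_q_bigger_1} and \eqref{prop:optimal_for_Lambda_is_Lambda:eq:q_bigger_1_not_close_L1} again) to read off the explicit $w_{opt}$; for $q=1$ the analogue is \eqref{prel:asoc_of_M_is_endpoint_Lorentz} together with $\varphi_{Y_X}(0^+)=0$. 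So the architecture of your argument is correct and matches the paper, but as written it is a roadmap rather than a proof: the central estimates --- the ones that ``earn the keep'' of \eqref{prop:optimal_for_Lambda_is_Lambda:eq:q_bigger_1_not_close_L1} and \eqref{prop:optimal_for_Lambda_is_Lambda:eq:q_bigger_1_not_close_Linfty} --- are missing.
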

\begin{proof}
We start by observing that the assumptions \eqref{prop:optimal_for_Lambda_is_Lambda:eq:not_limiting_q=1} and \eqref{prop:optimal_for_Lambda_is_Lambda:eq:not_limiting_q_bigger_1} are equivalent to the fact that
\begin{equation}\label{prop:optimal_for_Lambda_is_Lambda:eq:6}
\lim_{t\to0^+}\fundOptX(t) = 0.
\end{equation}
This follows from the following. By \cite[Proposition~1]{S:93} (see also~\cite[p.~148]{S:90}), the assumptions \eqref{prop:optimal_for_Lambda_is_Lambda:eq:not_limiting_q=1} and \eqref{prop:optimal_for_Lambda_is_Lambda:eq:not_limiting_q_bigger_1} are equivalent to the fact that
\begin{equation*}
\Lambda^q_w(\Omega) \not\subseteq L^{\frac{n}{m},1} (\Omega).
\end{equation*}
As was explained in the proof of Proposition~\ref{prop:sublimiting_X_neces_conds}, this is in turn equivalent to the validity of \eqref{prop:optimal_for_Lambda_is_Lambda:eq:6}.

First, let $q=1$. Since the weight $w$ satisfies \eqref{prel:when_Lambda_equiv_ri}, we may assume without loss of generality that the function $W$ is concave. Hence, it follows from \eqref{prel:asoc_of_endpoint_Lorentz_is_M} that
\begin{equation}\label{prop:optimal_for_Lambda_is_Lambda:eq:12}
\big( \Lambda^1_w \big)'(0,1) = \big( \Lambda_W \big)'(0,1) = M_{\frac{t}{W(t)}}(0,1).
\end{equation}
Furthermore, we claim that
\begin{equation}\label{prop:optimal_for_Lambda_is_Lambda:eq:11}
M_{\frac{t}{W(t)}}(0,1) = m_{\frac{t}{W(t)}}(0,1).
\end{equation}
To this end, using \eqref{E:sublimiting} twice, we have
\begin{align*}
\frac1{t} \int_0^t \frac{W(s)}{s} \dd{s} &\approx \frac1{t} \int_0^t \fundOptX(s) s^{-1 + \frac{m}{n}} \dd{s} \leq \frac{\fundOptX(t)}{t} \int_0^t s^{-1 + \frac{m}{n}} \dd{s} \\
&\approx \frac{\fundOptX(t)}{t} t^\frac{m}{n} \approx \frac{W(t)}{t}
\end{align*}
for every $t\in(0,1)$. In other words, \eqref{prel:M=m_int_cond} with $\varphi(t) = t/W(t)$, $t\in(0, 1)$, is true. It follows that \eqref{prop:optimal_for_Lambda_is_Lambda:eq:11} holds. By combining \eqref{prop:optimal_for_Lambda_is_Lambda:eq:12} and \eqref{prop:optimal_for_Lambda_is_Lambda:eq:11}, we arrive at
\begin{equation}\label{prop:optimal_for_Lambda_is_Lambda:eq:1}
\big( \Lambda^1_w \big)'(0,1) = m_{\frac{t}{W(t)}}(0,1).
\end{equation}
Next, we claim that
\begin{equation}\label{prop:optimal_for_Lambda_is_Lambda:eq:2}
\|g\|_{Y_X'(0,1)} \approx \sup_{t\in(0,1)}t^{\frac{m}{n}}\frac{t}{W(t)} g^{**}(t) \quad \text{for every $g\in\M(0,1)$}.
\end{equation}
To this end, using \eqref{prel:optimal_asoc_norm}, \eqref{prop:optimal_for_Lambda_is_Lambda:eq:1}, and the monotonicity of the function $t/W(t)$, we have
\begin{align}
\|g\|_{Y_X'(0,1)} &\approx \sup_{t\in(0,1)} \frac{t}{W(t)} \big[ s^{\frac{m}{n}} g^{**}(s) \big]^*(t) \leq \sup_{t\in(0,1)} \frac{t}{W(t)} \sup_{t\leq s < 1} s^{\frac{m}{n}} g^{**}(s) \nonumber\\
&=\sup_{s\in(0,1)} s^{\frac{m}{n}}g^{**}(s) \sup_{0< t \leq s} \frac{t}{W(t)} = \sup_{s\in(0,1)}s^{\frac{m}{n}}\frac{s}{W(s)} g^{**}(s) \label{prop:optimal_for_Lambda_is_Lambda:eq:3}
\end{align}
for every $g\in\M(0,1)$. As for the reverse inequality, we have
\begin{align}
\sup_{t\in(0,1)} \frac{t}{W(t)} \sup_{t\leq s < 1} s^{\frac{m}{n}} g^{**}(s) &= \|\sup_{t\leq s < 1} s^{\frac{m}{n}} g^{**}(s)\|_{m_{\frac{t}{W(t)}}(0,1)} \nonumber\\
&\lesssim \|t^{\frac{m}{n}} g^{**}(t)\|_{m_{\frac{t}{W(t)}}(0,1)} \approx \|g\|_{Y_X'(0,1)} \label{prop:optimal_for_Lambda_is_Lambda:eq:4}
\end{align}
for every $g\in\M(0,1)$ thanks to \cite[Lemma~4.10]{EMMP:20}, \eqref{prop:optimal_for_Lambda_is_Lambda:eq:1}, and \eqref{prel:optimal_asoc_norm}. Therefore, combining \eqref{prop:optimal_for_Lambda_is_Lambda:eq:3} and \eqref{prop:optimal_for_Lambda_is_Lambda:eq:4}, we obtain \eqref{prop:optimal_for_Lambda_is_Lambda:eq:2}. Now, using \eqref{prop:optimal_for_Lambda_is_Lambda:eq:2} and \eqref{E:sublimiting}, we see that
\begin{equation}\label{prop:optimal_for_Lambda_is_Lambda:eq:5}
\|g\|_{Y_X'(0,1)} \approx \sup_{t\in(0,1)}\frac{t}{\fundOptX(t)}  g^{**}(t) = \|g\|_{M_{\frac{t}{\fundOptX(t)}}} \quad \text{for every $g\in\M(0,1)$}.
\end{equation}
Hence, combining \eqref{prop:optimal_for_Lambda_is_Lambda:eq:5} and \eqref{prel:asoc_of_M_is_endpoint_Lorentz}, we obtain
\begin{equation}\label{prop:optimal_for_Lambda_is_Lambda:eq:8}
Y_X(0,1) = \Lambda_{\tilde{\psi}}(0,1),
\end{equation}
where $\tilde{\psi}$ is the least nondecreasing concave majorant of $\fundOptX$. Finally, \eqref{prop:optimal_for_Lambda_is_Lambda:eq:8} together with \eqref{prop:optimal_for_Lambda_is_Lambda:eq:6} and \eqref{prel:concave_majorant_pointwise_equiv} implies that
\begin{equation*}
Y_X(0,1) =  \Lambda^1_{w_{opt}}(0,1)
\end{equation*}
for some weight $w_{opt}$\textemdash namely, $w_{opt}=\tilde{\psi}'$.

Now, assume that $q\in (1, \infty)$. By \cite[Theorem~1]{S:90}, we have
\begin{equation}\label{prop:optimal_for_Lambda_is_Lambda:eq:13}
\|g\|_{(\Lambda^q_w(0,1))'} \approx \|g^{**}\|_{\Lambda^{q'}_v(0,1)} + \int_0^1 g^*(s) \dd {s} \quad \text {for every $g\in\M(0,1)$}, 
\end{equation}
where
\begin{equation*}
v(t) = \frac{w(t)}{W(t)^{q'}} t^{q'},\ t\in(0,1).
\end{equation*}
Furthermore, note that \eqref{prop:optimal_for_Lambda_is_Lambda:eq:q_bigger_1_not_close_Linfty} is \eqref{prel:when_Lambda_equiv_ri} for $\Lambda^{q'}_v(0,1)$. It follows from \cite[p.~148 and Theorem~4]{S:90} that
\begin{equation}\label{prop:optimal_for_Lambda_is_Lambda:eq:9}
\|g^{**}\|_{\Lambda^{q'}_v(0,1)} + \int_0^1 g^*(s) \dd {s} \approx \|g\|_{\Lambda^{q'}_v(0,1)} \quad \text {for every $g\in\M(0,1)$}.
\end{equation}
Hence, by combining \eqref{prop:optimal_for_Lambda_is_Lambda:eq:13} and \eqref{prop:optimal_for_Lambda_is_Lambda:eq:9}, we obtain
\begin{equation*}
X'(0,1) = (\Lambda^q_w(0,1))' = \Lambda^{q'}_v(0,1).
\end{equation*}
Therefore, we have
\begin{equation}\label{prop:optimal_for_Lambda_is_Lambda:eq:10}
\|g\|_{Y_X'(0,1)} \approx \|t^{\frac{m}{n}} g^{**}(t)\|_{\Lambda^{q'}_v(0,1)} \quad \text{for every $g\in \M(0,1)$}
\end{equation}
thanks to \eqref{prel:optimal_asoc_norm}. Now, on the one hand, we have
\begin{equation}\label{prop:optimal_for_Lambda_is_Lambda:eq:14}
\|t^{\frac{m}{n}} g^{**}(t)\|_{\Lambda^{q'}_v(0,1)} \leq \|v(t)^{\frac1{q'}} \sup_{t\leq s\leq 1} s^{\frac{m}{n}} g^{**}(s)\|_{L^{q'}(0,1)}
\end{equation}
for every $g\in \M(0,1)$. Since
\begin{align*}
t^{\frac{m}{n}q'} \int_0^t v(s) \dd{s} &\lesssim t^{\frac{m}{n}q'} \int_t^{2t} v(s) \dd{s} \leq \int_t^{2t} s^{\frac{m}{n}q'} v(s) \dd{s} \\
&= \sum_{j=0}^\infty \int_{2^{-j}t}^{2^{-j+1}t} s^{\frac{m}{n}q'} v(s) \dd{s} \lesssim \sum_{j=0}^\infty (2^{-j-1}t)^{\frac{m}{n}q'} \int_{2^{-j}t}^{2^{-j+1}t}  v(s) \dd{s} \\
&\leq \sum_{j=0}^\infty (2^{-j-1}t)^{\frac{m}{n}q'} V(2^{-j+1}t) \lesssim \sum_{j=0}^\infty (2^{-j-1}t)^{\frac{m}{n}q'} V(2^{-j}t) \\
&\lesssim \sum_{j=0}^\infty (2^{-j-1}t)^{\frac{m}{n}q'} V(2^{-j-1}t) \lesssim \sum_{j=0}^\infty (2^{-j-1}t)^{\frac{m}{n}q'} \int_{2^{-j-1}t}^{2^{-j}t} v(s) \dd{s} \\
&\leq \sum_{j=0}^\infty \int_{2^{-j-1}t}^{2^{-j}t} s^{\frac{m}{n}q'}v(s) \dd{s} = \int_0^t s^{\frac{m}{n}q'}v(s) \dd{s}
\end{align*}
for every $t\in(0, 1/2)$ thanks to \eqref{prop:optimal_for_Lambda_is_Lambda:eq:q_bigger_1_not_close_L1} and \eqref{prel:Lambda_delta2} (with $W$ replaced by $V$), it follows from \cite[Theorem~3.2]{GOP:06} that
\begin{equation*}
\|v(t)^{\frac1{q'}} \sup_{t\leq s\leq 1} s^{\frac{m}{n}} g^{**}(s)\|_{L^{q'}(0,1)} \lesssim \|t^{\frac{m}{n}}v(t)^{\frac1{q'}} g^{**}(t)\|_{L^{q'}(0,1)} \quad \text{for every $g\in \M(0,1)$}.
\end{equation*}
Combining this with \eqref{prop:optimal_for_Lambda_is_Lambda:eq:14}, we obtain
\begin{equation}\label{prop:optimal_for_Lambda_is_Lambda:eq:15}
\|t^{\frac{m}{n}} g^{**}(t)\|_{\Lambda^{q'}_v(0,1)} \lesssim \|t^{\frac{m}{n}}v(t)^{\frac1{q'}} g^{**}(t)\|_{L^{q'}(0,1)} \quad \text{for every $g\in \M(0,1)$}.
\end{equation}
On the other hand, as in \eqref{prop:optimal_for_Lambda_is_Lambda:eq:4}, we have
\begin{align}
\|t^{\frac{m}{n}}v(t)^{\frac1{q'}} g^{**}(t)\|_{L^{q'}(0,1)} &\leq \|v(t)^{\frac1{q'}} \sup_{t\leq s\leq1}s^\frac{m}{n}g^{**}(s)\|_{L^{q'}(0,1)} \nonumber\\
&= \|\sup_{t\leq s\leq1}s^\frac{m}{n}g^{**}(s)\|_{\Lambda^{q'}_v(0,1)} \nonumber\\
&\lesssim  \|t^\frac{m}{n}g^{**}(t)\|_{\Lambda^{q'}_v(0,1)} \label{prop:optimal_for_Lambda_is_Lambda:eq:16}
\end{align}
for every $g\in \M(0,1)$ thanks to \cite[Lemma~4.10]{EMMP:20}. Hence, by combining \eqref{prop:optimal_for_Lambda_is_Lambda:eq:10}, \eqref{prop:optimal_for_Lambda_is_Lambda:eq:15}, and \eqref{prop:optimal_for_Lambda_is_Lambda:eq:16}, we obtain
\begin{equation*}
\|g\|_{Y_X'(0,1)} \approx \|t^{\frac{m}{n}}v(t)^{\frac1{q'}} g^{**}(t)\|_{L^{q'}(0,1)} \quad \text{for every $g\in \M(0,1)$}.
\end{equation*}
With this at our disposal, it follows from \cite[Theorem~2.7]{EKP:00} (we also use both \eqref{prop:optimal_for_Lambda_is_Lambda:eq:not_limiting_q_bigger_1} and \eqref{prop:optimal_for_Lambda_is_Lambda:eq:q_bigger_1_not_close_L1} here) that
\begin{equation*}
Y_X(0,1) = \Lambda^q_{w_{opt}}(0,1)
\end{equation*}
for some weight $w_{opt}$\textemdash namely 
\begin{equation*}
w_{opt}(t) = \Bigg( \Big( 1 + \int_{s}^1 \frac{w(\tau)}{W(\tau)^{q'}} \tau^{\frac{m}{n}q'} \dd{\tau} \Big)^{1-q} \Bigg)'(t) \quad \text{for a.e.~$t\in(0,1)$},
\end{equation*}
which concludes the proof.
\end{proof}

\pagebreak
\begin{remark}\label{rem:optimal_for_Lambda_is_Lambda}\ 
\begin{enumerate}[label=(\roman*), ref=(\roman*)]
\item As explained at the beginning of the proof of \cref{prop:optimal_for_Lambda_is_Lambda}, the assumptions \eqref{prop:optimal_for_Lambda_is_Lambda:eq:not_limiting_q=1} and \eqref{prop:optimal_for_Lambda_is_Lambda:eq:not_limiting_q_bigger_1} are equivalent to the fact that $X = \Lambda^q_w$ is not (super)limiting, by which we mean that $\VmXOm$ is not embedded in $L^\infty(\Omega)$. When $q\in(1,\infty)$, the assumptions \eqref{prop:optimal_for_Lambda_is_Lambda:eq:q_bigger_1_not_close_L1} and \eqref{prop:optimal_for_Lambda_is_Lambda:eq:q_bigger_1_not_close_Linfty} ensure, loosely speaking, that $X = \Lambda^q_w$ is not ``too close to $L^1$'' and not ``too close to $L^\infty$'', respectively. Typically, the validity of \eqref{prop:optimal_for_Lambda_is_Lambda:eq:q_bigger_1_not_close_Linfty} is already implied by that of \eqref{prop:optimal_for_Lambda_is_Lambda:eq:not_limiting_q_bigger_1}.
\item Consider the Lorentz spaces $L^{p,q}(0,1) = \Lambda^q_{w_{p,q}}(0,1)$ (recall~\eqref{prel:Lorentz_Lpq_weight}). Note that in order that $w$ is a weight, we need to have $p<\infty$. Moreover, when $q\in(1, \infty)$, \eqref{prop:optimal_for_Lambda_is_Lambda:eq:q_bigger_1_not_close_Linfty} is satisfied for $p<\infty$. It is easy to see that  \eqref{prel:when_Lambda_equiv_ri} is satisfied if and only if either $p\in(1, \infty)$ and $q\in[1, \infty)$ or $p=q=1$. When $q=1$, the condition \eqref{prop:optimal_for_Lambda_is_Lambda:eq:not_limiting_q=1} is satisfied if and only if $p < n/m$. When $q\in(1, \infty)$, the condition \eqref{prop:optimal_for_Lambda_is_Lambda:eq:not_limiting_q_bigger_1} is satisfied if and only if $p \leq n/m$, whereas \eqref{prop:optimal_for_Lambda_is_Lambda:eq:q_bigger_1_not_close_L1} is satisfied if and only if $p>1$. Putting everything together, we see that the assumptions of \cref{prop:optimal_for_Lambda_is_Lambda} are satisfied if and only if either $q=1$ and $p\in[1, n/m)$ or $q\in(1, \infty)$ and $p\in(1, n/m]$.
\item The assumptions of \cref{prop:optimal_for_Lambda_is_Lambda} do not guarantee that $\|\cdot\|_{X(0,1)} = \|\cdot\|_{\Lambda^q_w(0,1)}$ is sublimiting in the sense of \cref{def:sublimiting}. For example, they are satisfied for the Lorentz spaces $L^{\frac{n}{m},q}(0,1) = \Lambda^q_{w_{n/m,q}}(0,1)$ with $q\in(1, \infty)$ (see the preceding remark and \cref{rem:borderling_limiting_cases}).
\end{enumerate}
\end{remark}

The following proposition is essentially a particular case of \cref{prop:optimal_for_Lambda_is_Lambda} with the extra information that $\|\cdot\|_{X(0,1)} = \|\cdot\|_{\Lambda^q_w(0,1)}$ is sublimiting in the sense of \cref{def:sublimiting}.
\begin{proposition}\label{prop:when_is_optimal_Lambda_nonlimiting}
Let $1\leq m <n$. Let $q\in[1, \infty)$ and let $w\in\Mpl(0,1)$ be a weight satisfying \eqref{prel:when_Lambda_equiv_ri} and
\begin{equation}\label{prop:optimal_nonlimiting_Lambda:eq:not_limiting_int_cond}
\int_t^1 \frac{s^{-1 + \frac{m}{n}}}{W(s)^\frac1{q}} \dd{s} \lesssim \frac{t^\frac{m}{n}}{W(t)^\frac1{q}} \quad \text{for every $t\in(0,1)$}.
\end{equation}
When $q\in(1,\infty)$, assume in addition that \eqref{prop:optimal_for_Lambda_is_Lambda:eq:q_bigger_1_not_close_L1} and \eqref{prop:optimal_for_Lambda_is_Lambda:eq:q_bigger_1_not_close_Linfty} are also satisfied. Then $\|\cdot\|_{X(0,1)} = \|\cdot\|_{\Lambda^q_w(0,1)}$ is sublimiting and \eqref{prop:optimal_for_Lambda_is_Lambda:eq:optimal_is_Lambda} is satisfied for a weight $w_{opt}$ satisfying
\begin{equation}\label{prop:when_is_optimal_Lambda_nonlimiting:equivalence_of_fund_functions}
W_{opt}(t) \approx  t^{-\frac{m}{n}q}W(t) \quad \text{for every $t\in(0,1)$}.
\end{equation}
\end{proposition}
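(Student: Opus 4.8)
The plan is to derive the statement from three facts already in place: the integral sufficient condition for a norm to be sublimiting (\cref{rem:sublimiting_norms}\ref{rem:sublimiting_norms:item:nonlimiting_int_cond}), the dichotomy of \cref{prop:sublimiting_X_neces_conds}, and the structural identification \cref{prop:optimal_for_Lambda_is_Lambda}. Everything rests on the elementary fact that the fundamental function of $X=\Lambda^q_w$ is $\fundX(t)=\|\chi_{(0,t)}\|_{\Lambda^q_w(0,1)}=W(t)^{1/q}$ for $t\in[0,1]$. Granting this, the hypothesis \eqref{prop:optimal_nonlimiting_Lambda:eq:not_limiting_int_cond} is nothing but condition \eqref{E:not_limiting_int_cond} for the norm $\|\cdot\|_{X(0,1)}$, so \cref{rem:sublimiting_norms}\ref{rem:sublimiting_norms:item:nonlimiting_int_cond} at once gives that $\|\cdot\|_{X(0,1)}$ is sublimiting; this settles the first assertion.

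Next I would check that $X(0,1)\ne L^{n/m,1}(0,1)$\textemdash the one place where \eqref{prop:optimal_nonlimiting_Lambda:eq:not_limiting_int_cond} genuinely says more than being sublimiting (recall that $L^{n/m,1}$ is itself sublimiting). If $X(0,1)=L^{n/m,1}(0,1)$ held, the two spaces would have equivalent fundamental functions, which would force $W(t)^{1/q}\approx t^{m/n}$, i.e., $W(t)\approx t^{\frac{m}{n}q}$ on $(0,1)$; but then the left-hand side of \eqref{prop:optimal_nonlimiting_Lambda:eq:not_limiting_int_cond} equals $\int_t^1 s^{-1+\frac{m}{n}}/W(s)^{1/q}\,\dd s\approx\int_t^1 s^{-1}\,\dd s=\log(1/t)$, which is not controlled by the right-hand side $t^{m/n}/W(t)^{1/q}\approx1$ as $t\to0^+$\textemdash a contradiction. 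Since $\|\cdot\|_{X(0,1)}$ is sublimiting and $X(0,1)\ne L^{n/m,1}(0,1)$, \cref{prop:sublimiting_X_neces_conds} yields $X(0,1)\not\subseteq L^{n/m,1}(0,1)$, that is, $\Lambda^q_w(\Omega)\not\subseteq L^{n/m,1}(\Omega)$.

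I would then invoke the equivalence recalled at the beginning of the proof of \cref{prop:optimal_for_Lambda_is_Lambda}: $\Lambda^q_w(\Omega)\not\subseteq L^{n/m,1}(\Omega)$ is equivalent to \eqref{prop:optimal_for_Lambda_is_Lambda:eq:not_limiting_q=1} when $q=1$, and to \eqref{prop:optimal_for_Lambda_is_Lambda:eq:not_limiting_q_bigger_1} when $q\in(1,\infty)$ (by the results of~\cite{S:90,S:93}). Combined with the standing assumption \eqref{prel:when_Lambda_equiv_ri} and, for $q\in(1,\infty)$, the two extra hypotheses \eqref{prop:optimal_for_Lambda_is_Lambda:eq:q_bigger_1_not_close_L1} and \eqref{prop:optimal_for_Lambda_is_Lambda:eq:q_bigger_1_not_close_Linfty} assumed here, all hypotheses of \cref{prop:optimal_for_Lambda_is_Lambda} are met, so it provides a weight $w_{opt}$ with $Y_X(\Omega)=\Lambda^q_{w_{opt}}(\Omega)$\textemdash this is \eqref{prop:optimal_for_Lambda_is_Lambda:eq:optimal_is_Lambda}.

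Finally, \eqref{prop:when_is_optimal_Lambda_nonlimiting:equivalence_of_fund_functions} comes out of comparing two descriptions of $\fundOptX$. On the one hand, $Y_X(\Omega)=\Lambda^q_{w_{opt}}(\Omega)$ together with the fundamental-function identity above (now applied to $w_{opt}$) gives $\fundOptX(t)\approx W_{opt}(t)^{1/q}$. On the other hand, since $\|\cdot\|_{X(0,1)}$ is sublimiting, \cref{def:sublimiting} and $\fundX(t)=W(t)^{1/q}$ give $\fundOptX(t)\approx t^{-m/n}W(t)^{1/q}$. Equating these and raising to the power $q$ yields $W_{opt}(t)\approx t^{-\frac{m}{n}q}W(t)$ for every $t\in(0,1)$. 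I do not anticipate a real obstacle; the only points that call for a little care are the exclusion of $L^{n/m,1}$ in the second step (checking that \eqref{prop:optimal_nonlimiting_Lambda:eq:not_limiting_int_cond} is strictly stronger than merely being sublimiting) and making sure the use of \cref{prop:optimal_for_Lambda_is_Lambda} is not circular\textemdash it is not, because the equivalence quoted from its proof rests on~\cite{S:90,S:93} rather than on \cref{prop:optimal_for_Lambda_is_Lambda} itself.
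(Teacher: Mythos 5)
Your proposal is correct and follows essentially the same route as the paper's proof: both establish sublimiting via \cref{rem:sublimiting_norms}\ref{rem:sublimiting_norms:item:nonlimiting_int_cond}, then rule out the limiting regime by showing that \eqref{prop:optimal_nonlimiting_Lambda:eq:not_limiting_int_cond} fails when $W(t)^{1/q}\approx t^{m/n}$ (producing a logarithm on the left), so that the hypotheses of \cref{prop:optimal_for_Lambda_is_Lambda} are met, and finally read off \eqref{prop:when_is_optimal_Lambda_nonlimiting:equivalence_of_fund_functions} from $\fundX=W^{1/q}$, $\fundOptX\approx W_{opt}^{1/q}$, and \eqref{E:sublimiting}. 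The only cosmetic difference is that you route the exclusion through the dichotomy of \cref{prop:sublimiting_X_neces_conds}, whereas the paper argues directly by contradiction via the equivalence with $Y_X=L^\infty$; the two are interchangeable.
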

\begin{proof}
In view of \cref{prop:optimal_for_Lambda_is_Lambda}, \cref{rem:sublimiting_norms}\ref{rem:sublimiting_norms:item:nonlimiting_int_cond}, and \eqref{E:sublimiting}, we only need to show that the validity of \eqref{prop:optimal_nonlimiting_Lambda:eq:not_limiting_int_cond} implies that of \eqref{prop:optimal_for_Lambda_is_Lambda:eq:not_limiting_q=1} or \eqref{prop:optimal_for_Lambda_is_Lambda:eq:not_limiting_q_bigger_1}. Suppose that this is not the case. As explained at the beginning of the proof of \cref{prop:optimal_for_Lambda_is_Lambda}, this means that the embedding
\begin{equation}\label{prop:when_is_optimal_Lambda_nonlimiting:eq:1}
\Lambda^q_w(0,1) \hookrightarrow L^{\frac{n}{m},1} (0,1) \qquad \text{is true}.
\end{equation}
Furthermore, as in the proof of \cref{prop:sublimiting_X_neces_conds}, the validity of \eqref{prop:when_is_optimal_Lambda_nonlimiting:eq:1} is equivalent to the fact that $Y_X = L^\infty$. It follows that
\begin{equation*}
\fundOptX(t) \approx 1 \quad \text{for every $t\in(0,1)$}.
\end{equation*}
Combining this, the fact that $\|\cdot\|_{X(0,1)} = \|\cdot\|_{\Lambda^q_w(0,1)}$ is sublimiting (recall \cref{rem:sublimiting_norms}\ref{rem:sublimiting_norms:item:nonlimiting_int_cond}), and \eqref{E:sublimiting}, we obtain
\begin{equation}\label{prop:when_is_optimal_Lambda_nonlimiting:eq:2}
\frac{t^\frac{m}{n}}{W(t)^\frac1{q}} \approx 1 \quad \text{for every $t\in(0,1)$}.
\end{equation}
Now, using \eqref{prop:when_is_optimal_Lambda_nonlimiting:eq:2}, we have
\begin{equation}\label{prop:when_is_optimal_Lambda_nonlimiting:eq:3}
\int_t^1 \frac{s^{-1 + \frac{m}{n}}}{W(s)^\frac1{q}} \dd{s} \approx \int_t^1 s^{-1} \dd{s} = \log(1/t) \quad \text{for every $t\in (0,1)$}.
\end{equation}
However, \eqref{prop:when_is_optimal_Lambda_nonlimiting:eq:2} and \eqref{prop:when_is_optimal_Lambda_nonlimiting:eq:3} together clearly contradict the validity of \eqref{prop:optimal_nonlimiting_Lambda:eq:not_limiting_int_cond}, which concludes the proof.
\end{proof}

\begin{remark}
A simple but often applicable sufficient condition for \eqref{prop:optimal_nonlimiting_Lambda:eq:not_limiting_int_cond} is that there is $\alpha > m/n$ such that the function
		\begin{equation*}
			(0,1)\ni t\mapsto \frac{t^\alpha}{W(t)^\frac1{q}}
		\end{equation*}
		is equivalent to a nonincreasing function.
\end{remark}

We are finally ready to prove our main result, which loosely speaking tells us that the optimal sublimiting Sobolev embeddings \eqref{prel:optimal_Sob_emb} for Lambda spaces are not strictly singular. Its proof rests on \cref{thm:small_support_big_norm} and \cref{prop:when_is_optimal_Lambda_nonlimiting}, which are used in such a way that we can exploit the fact that both $X = \Lambda^q_{w}$ and $Y_X = \Lambda^q_{w_{opt}}$ contain an order isomorphic copy of $\ell_q$ (see~\cite[Theorem~1]{KM:04}).
\begin{theorem}\label{thm:optimal_embedding_Lambda_spaces_not_SS}
Let $1\leq m <n$. Let $q\in[1, \infty)$ and let $w\in\Mpl(0,1)$ be a weight satisfying \eqref{prel:when_Lambda_equiv_ri} and \eqref{prop:optimal_nonlimiting_Lambda:eq:not_limiting_int_cond}. When $q\in(1,\infty)$, assume in addition that \eqref{prop:optimal_for_Lambda_is_Lambda:eq:q_bigger_1_not_close_L1} and \eqref{prop:optimal_for_Lambda_is_Lambda:eq:q_bigger_1_not_close_Linfty} are also satisfied. Then the optimal target r.i.\ space $Y_X(\Omega)$ for $X(\Omega) = \Lambda^q_w(\Omega)$ in the Sobolev embedding \eqref{prel:optimal_Sob_emb} satisfies \eqref{prop:optimal_for_Lambda_is_Lambda:eq:optimal_is_Lambda} with a weight $w_{opt}$ satisfying \eqref{prop:when_is_optimal_Lambda_nonlimiting:equivalence_of_fund_functions}, and the optimal Sobolev embedding
\begin{equation}\label{thm:optimal_embedding_Lambda_spaces_not_SS:optimal_embedding}
\VmLamOm \hookrightarrow \Lambda^q_{w_{opt}}(\Omega)
\end{equation}
is not strictly singular. 

Moreover, there are constants $C_2$ and $C_3$ such that for all $\varepsilon_1, \varepsilon_2>0$ there is a linear independent sequence of functions $\{u_j\}_{j = 1}^\infty\subseteq \VmLamOm$ such that
\begin{equation}\label{thm:optimal_embedding_Lambda_spaces_not_SS:optimal_norm_E}
\Big\| \sum_{j=1}^\infty \alpha_j u_j \Big\|_{\Lambda^q_{w_{opt}}(\Omega)} \geq \Big( \frac{C_2}{1+\varepsilon_1} - \varepsilon_2 \Big) \Big( \sum_{j = 1}^\infty |\alpha_j|^q \Big)^\frac1{q}
\end{equation}
for every $\{\alpha_j\}_{j = 1}^\infty\in\ell_q$, and simultaneously
\begin{equation}\label{thm:optimal_embedding_Lambda_spaces_not_SS:gradient_norm_E}
\Big\| \sum_{j=1}^N \alpha_j u_j \Big\|_{\VmLamOm} \leq C_3 \Big( \sum_{j = 1}^N |\alpha_j|^q \Big)^\frac1{q}
\end{equation}
for every $N\in\N$.
\end{theorem}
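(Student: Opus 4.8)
The plan is to combine the spike-function construction of \cref{thm:small_support_big_norm} with the $\ell_q$-structure of Lambda spaces to manufacture, inside $\VmLamOm$, a sequence that the embedding sends (isomorphically) onto a sequence equivalent to the unit vector basis of $\ell_q$. The first observation is that \cref{prop:when_is_optimal_Lambda_nonlimiting} applies: under the stated hypotheses, $\|\cdot\|_{\Lambda^q_w(0,1)}$ is sublimiting, $Y_X(\Omega) = \Lambda^q_{w_{opt}}(\Omega)$, and $W_{opt}(t)\approx t^{-mq/n}W(t)$. Since $\fundX[\Lambda^q_w](t)\approx W(t)^{1/q}$ and $\fundX[\Lambda^q_{w_{opt}}](t)\approx W_{opt}(t)^{1/q}\approx t^{-m/n}W(t)^{1/q}$, the space $Y=\Lambda^q_{w_{opt}}$ has the same fundamental function as $Y_X$, so \eqref{E:target_between_optimal_and_Marc} holds with $Y=\Lambda^q_{w_{opt}}$, and \cref{thm:small_support_big_norm} is available with this $Y$. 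Thus for every $a\in(0,1)$ we get $f_a\in\I_{a/2,a}$ with $\|f_a\|_{X(0,1)}=1$, a function $u_{f_a}$ supported on the closed ball $\bar B(x_0,a^{1/n}R)$, with $\|u_{f_a}\|_{\VmLamOm}\le C_1$ and $C_2\le\|u_{f_a}\|_{\Lambda^q_{w_{opt}}(\Omega)}$, and the linearity \eqref{thm:small_support_big_norm:E:controlled_Sobolev_norm_of_linear_comb}.

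\textbf{Choosing the spikes.} Next I would pick a rapidly decreasing sequence $a_1 > a_2 > \cdots$ so that the radii $a_j^{1/n}R$ shrink fast; this makes the spikes $u_j := u_{f_{a_j}}$ "almost disjointly supported" — the support of $u_j$ is $\bar B(x_0,a_j^{1/n}R)$, and the $f_{a_j}$ live on the disjoint-ish dyadic shells $(a_j/2,a_j)$. The two inequalities \eqref{thm:optimal_embedding_Lambda_spaces_not_SS:optimal_norm_E} and \eqref{thm:optimal_embedding_Lambda_spaces_not_SS:gradient_norm_E} are then handled on opposite sides. For the \emph{upper} estimate on the Sobolev norm: by \eqref{thm:small_support_big_norm:E:controlled_Sobolev_norm_of_linear_comb} it suffices to bound $\|\sum_j\alpha_j f_{a_j}\|_{X(0,1)}=\|\sum_j\alpha_j f_{a_j}\|_{\Lambda^q_w(0,1)}$ by a constant times $(\sum|\alpha_j|^q)^{1/q}$. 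Here is where \cite[Theorem~1]{KM:04} enters (as flagged in the introduction): the functions $f_{a_j}$ are (normalized) characteristic functions of the disjoint shells $(a_j/2,a_j)$, so $\{f_{a_j}\}$ is — after a fast enough choice of $a_j$ and possibly passing to a subsequence — equivalent to the $\ell_q$ unit vector basis in $\Lambda^q_w$; the upper $\ell_q$-estimate is the easy half of that equivalence and gives \eqref{thm:optimal_embedding_Lambda_spaces_not_SS:gradient_norm_E} with $C_3=C_1\cdot(\text{const})$. This actually requires care — the clean $\ell_q$ equivalence in \cite{KM:04} must be adapted so that the \emph{same} blocks simultaneously control the $\Lambda^q_w$ norm (upper) and the $\Lambda^q_{w_{opt}}$ norm (lower), which is exactly the "substantial modification and refinement" promised in the introduction.

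\textbf{The lower estimate and the main obstacle.} For the \emph{lower} bound on $\|\sum_j\alpha_j u_j\|_{\Lambda^q_{w_{opt}}(\Omega)}$: since $u_{|f_{a_j}|}=u_{f_{a_j}}$ is radially nonincreasing about $x_0$ and the spikes are concentrated on nested balls, one estimates the rearrangement $(\sum_j\alpha_j u_j)^*$ from below on each shell by (essentially) $|\alpha_j|\,u_j^*$ and sums the $q$-th powers against $w_{opt}$; using $C_2\le\|u_j\|_{\Lambda^q_{w_{opt}}}$ and the $\Delta_2$-type control \eqref{prel:Lambda_delta2} on $W_{opt}$ one recovers $\gtrsim C_2(\sum|\alpha_j|^q)^{1/q}$, with the losses $\varepsilon_1,\varepsilon_2$ absorbing the non-disjointness and the "tail" contributions (the $\varepsilon_1$ coming from a perturbation/gliding-hump argument and $\varepsilon_2$ from truncating to finitely many $u_j$ and controlling the remainder). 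I expect the main obstacle to be precisely this bookkeeping: the spikes $u_j$ are \emph{not} genuinely disjointly supported (they are all concentrated at $x_0$), so one cannot directly add rearrangements; one must exploit the radial monotonicity and the geometric decay of the radii to show the rearrangement of the sum dominates a weighted $\ell_q$-sum of the individual rearrangements up to a factor $1/(1+\varepsilon_1)$, and simultaneously ensure the very same sequence still satisfies the upper $\ell_q$-bound in $X$. Getting both one-sided estimates to hold for one sequence — reconciling \cite[Theorem~1]{KM:04} applied in $\Lambda^q_w$ with its counterpart in $\Lambda^q_{w_{opt}}$ through the bridge $W_{opt}(t)\approx t^{-mq/n}W(t)$ — is the crux. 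Once \eqref{thm:optimal_embedding_Lambda_spaces_not_SS:optimal_norm_E} and \eqref{thm:optimal_embedding_Lambda_spaces_not_SS:gradient_norm_E} are in place with $C_2/(1+\varepsilon_1)-\varepsilon_2>0$ (choose $\varepsilon_1,\varepsilon_2$ small), the closed linear span of $\{u_j\}$ in $\VmLamOm$ is infinite-dimensional and the embedding is bounded below on it, so \eqref{thm:optimal_embedding_Lambda_spaces_not_SS:optimal_embedding} is not strictly singular; linear independence of $\{u_j\}$ follows from the distinctness of their supports.
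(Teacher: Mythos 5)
Your strategy coincides with the paper's: use \cref{prop:when_is_optimal_Lambda_nonlimiting} to identify $Y_X=\Lambda^q_{w_{opt}}$ with $W_{opt}(t)\approx t^{-mq/n}W(t)$ so that \cref{thm:small_support_big_norm} applies with $Y=\Lambda^q_{w_{opt}}$; build spikes $u_j=u_{f_j}$, $f_j\in\I_{a_j/2,a_j}$, on a rapidly decreasing $\{a_j\}$; truncate to the disjoint annuli $B_j\setminus B_{j+1}$ to get the $\ell_q$-lower bound in $\Lambda^q_{w_{opt}}$ with $\varepsilon_1$ absorbing the truncation loss and $\varepsilon_2$ absorbing $\ell_{q'}$-summable tails $\|u_j\chi_{B_{j+1}}\|\le\eta_j$; and get the $\ell_q$-upper bound in $\VmLamOm$ from \eqref{thm:small_support_big_norm:E:controlled_Sobolev_norm_of_linear_comb} plus an $\ell_q$-estimate for $\sum\alpha_jf_j$ in $\Lambda^q_w$. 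You also correctly locate the crux, namely that both one-sided estimates must hold for the \emph{same} sequence. Where the proposal stays vague, though, is exactly there, and in a way that cannot be waved off. The $\ell_q$-upper bound in $\Lambda^q_w$ is not the ``easy half'' of a KM:04-type statement: the $f_j=c_j\chi_{(a_j/2,a_j)}$ come with prescribed supports and heights $c_j=W(a_j/2)^{-1/q}\nearrow\infty$, and in a rearrangement-invariant space disjoint supports do \emph{not} make the norm $\ell_q$-additive, since $(\sum\alpha_jf_j)^*$ interleaves the blocks. The paper does not invoke \cite[Theorem~1]{KM:04} anywhere in the proof; instead it encodes ``fast enough'' as the explicit inductive decay condition $c_j^qW(a_{j+1})<4^{-j}$ and proves the upper bound by splitting $\int_0^{2a_1}(\sum\alpha_jf_j)^*(t)^qw(t)\dd t$ over the intervals $(2a_{k+1},2a_k)$, separating via \eqref{prel:pointwise_inequality_sum_of_rearrangements} the diagonal term $|\alpha_k|^qc_k^qW(2a_k)\lesssim\Delta_2^2|\alpha_k|^q$ (using \eqref{prel:Lambda_delta2} and the normalization $c_k^qW(a_k/2)=1$) from the off-diagonal low-index terms (geometrically small by the decay condition) and observing that the high-index terms vanish on those intervals. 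Without that decay condition and computation the upper bound is simply unproved.

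A second, smaller issue: ``possibly passing to a subsequence'' misdescribes the selection mechanism. The constraints $\|u_j\chi_{B_{j+1}}\|_{\Lambda^q_{w_{opt}}}\le\eta_j$, $(1+\varepsilon_1)\|u_j\chi_{B_j\setminus B_{j+1}}\|_{\Lambda^q_{w_{opt}}}>C_2$, and $c_j^qW(a_{j+1})<4^{-j}$ tie $a_{j+1}$ to the already-constructed spike $u_j$, so $a_{j+1}$ must be chosen \emph{after} $u_j$ (via absolute continuity of the $\Lambda^q_{w_{opt}}$-norm and monotone/dominated convergence). One cannot fix a fast-decreasing $\{a_j\}$ up front and thin it; the sequence must be produced inductively, interleaved with the $u_j$.
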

\begin{proof}
By \cref{prop:when_is_optimal_Lambda_nonlimiting}, $\|\cdot\|_{X(0,1)} = \|\cdot\|_{\Lambda^q_w(0,1)}$ is sublimiting and the optimal target r.i.\ space $Y_X(\Omega)$ satisfies \eqref{prop:optimal_for_Lambda_is_Lambda:eq:optimal_is_Lambda} with a weight $w_{opt}$ for which \eqref{prop:when_is_optimal_Lambda_nonlimiting:equivalence_of_fund_functions} is true. Furthermore, it is easy to see that the fact that the optimal embedding \eqref{thm:optimal_embedding_Lambda_spaces_not_SS:optimal_embedding} is not strictly singular follows from the existence of a linear independent sequence of functions $\{u_j\}_{j = 1}^\infty\subseteq \VmLamOm$ for which \eqref{thm:optimal_embedding_Lambda_spaces_not_SS:optimal_norm_E} and \eqref{thm:optimal_embedding_Lambda_spaces_not_SS:gradient_norm_E} are true (with an arbitrarily chosen $\varepsilon_1, \varepsilon_2>0$ in such a way that $C_2/(1+\varepsilon_1) - \varepsilon_2 > 0$). Therefore, we only need to prove the existence of such a sequence of functions.

Let $C_1$ and $C_2$ be the constants from \eqref{thm:small_support_big_norm:E:controlled_Sobolev_norm} and \eqref{thm:small_support_big_norm:E:big_Y_norm} for $X = \Lambda^q_w$ and $Y = \Lambda^q_{w_{opt}}$. For every $j\in\N$, set
\begin{equation*}
\eta_j = \begin{cases}
	\varepsilon_2 \quad &\text{if $q=1$},\\
	\theta^j \quad &\text{if $q\in(1, \infty)$},
\end{cases}
\end{equation*}
where $\theta\in(0,1)$ is so small that
\begin{equation*}
\sum_{j=1}^\infty \theta^{q'j} \leq \varepsilon_2^{q'}.
\end{equation*}
Note that
\begin{equation}\label{thm:optimal_embedding_Lambda_spaces_not_SS:eq:23}
\|\{\eta_j\}_{j = 1}^\infty\|_{\ell_{q'}} \leq \varepsilon_2.
\end{equation}
Furthermore, set (recall~\eqref{prel:Lambda_delta2})
\begin{equation}\label{thm:optimal_embedding_Lambda_spaces_not_SS:eq:30}
\Delta_2 = \sup_{t\in(0,1/2)} \frac{W(2t)}{W(t)} < \infty.
\end{equation}

Fix a ball $B=B(x_0,R)\subseteq \Omega$ inside $\Omega$. Let $\varepsilon_1, \varepsilon_2 > 0$ be given. We will construct the desired linear independent sequence of functions $\{u_j\}_{j = 1}^\infty\subseteq \VmLamOm$ inductively. Set $a_1 = 1/2$ and $B_1 = B(x_0, a_1^{1/n}R)$. By~\cref{thm:small_support_big_norm}, there are positive constants $C_1$ and $C_2$, and functions $f_1\in\I_{a_1/2, a_1}$ and $u_1=u_{f_1}\in\VmXOm\cap\mathcal C^{m-1}(\Omega\setminus\{x_0\})$ such that
\begin{align}
f_1 &= c_1\chi_{(a_1/2, a_1)}, \label{thm:optimal_embedding_Lambda_spaces_not_SS:eq:1}\\
\|f_1\|_{\Lambda^q_w(0,1)} &= 1, \label{thm:optimal_embedding_Lambda_spaces_not_SS:eq:2}\\
\|u_1\|_{\VmLamOm} &\leq C_1 \|f_1\|_{\Lambda^q_w(0,1)} = C_1, \label{thm:optimal_embedding_Lambda_spaces_not_SS:eq:3}\\
C_2 &\leq \|u_1\|_{\Lambda^q_{w_{opt}}(\Omega)} < \infty, \label{thm:optimal_embedding_Lambda_spaces_not_SS:eq:4}
\end{align}
and
\begin{equation}\label{thm:optimal_embedding_Lambda_spaces_not_SS:eq:41}
    \text{the support of $u_1$ is the closed ball $\bar{B}_1$}.
\end{equation}
Note that
\begin{equation}\label{thm:optimal_embedding_Lambda_spaces_not_SS:eq:18}
\|f_1\|_{\Lambda^q_w(0,1)} = c_1 W\Big( \frac{a_1}{2} \Big)^\frac1{q} = 1
\end{equation}
thanks to \eqref{thm:optimal_embedding_Lambda_spaces_not_SS:eq:1} and \eqref{thm:optimal_embedding_Lambda_spaces_not_SS:eq:2}. Furthermore, thanks to the absolute continuity of $\|\cdot\|_{\Lambda^q_{w_{opt}}(\Omega)}$, the monotone convergence theorem, \eqref{thm:optimal_embedding_Lambda_spaces_not_SS:eq:18}, and the absolute continuity of the integral, we can find $0< \tilde{a} < a_1/4$ and $\tilde{B}_2 = B(x_0, \tilde{a}^{1/n}R)$ so small that
\begin{align}
\|u_1\chi_{\tilde{B}_2}\|_{\Lambda^q_{w_{opt}}(\Omega)} &\leq \eta_1 \label{thm:optimal_embedding_Lambda_spaces_not_SS:eq:5},\\
(1 + \varepsilon_1)\|u_1\chi_{B_1\setminus \tilde{B}_2}\|_{\Lambda^q_{w_{opt}}(\Omega)} & > C_2 \label{thm:optimal_embedding_Lambda_spaces_not_SS:eq:6},
\end{align}
and
\begin{equation}\label{thm:optimal_embedding_Lambda_spaces_not_SS:eq:7}
c_1^q W(\tilde{a}) < \frac1{4}. 
\end{equation}
Thanks to \eqref{thm:optimal_embedding_Lambda_spaces_not_SS:eq:6} combined with the dominated convergence theorem, we can find $0 < a_2 < \tilde{a}$ such that
\begin{equation}\label{thm:optimal_embedding_Lambda_spaces_not_SS:eq:17}
(1+\varepsilon_1)^q \int_{|B_2|}^{|B_1|} (u_1\chi_{B_1\setminus \tilde{B}_2})^*(t)^q w_{opt}(t) \dd{t} \geq C_2^q,
\end{equation}
where $B_2 = B(x_0, a_2^{1/n}R)$. Note that
\begin{equation*}
	a_2 < \frac{a_1}{4}
\end{equation*}
and that the inequalities \eqref{thm:optimal_embedding_Lambda_spaces_not_SS:eq:5}--\eqref{thm:optimal_embedding_Lambda_spaces_not_SS:eq:17} still hold with $\tilde{B}_2$ and $\tilde{a}$ replaced by $B_2$ and $a_2$, respectively. Having found $a_2$ and $B_2$, we now use \cref{thm:small_support_big_norm} again to find functions $f_2=c_2\chi_{(a_2/2,a_2)}\in\I_{a_2/2, a_2}$ for some $c_2>0$ and $u_2=u_{f_2}\in\VmXOm\cap\mathcal C^{m-1}(\Omega\setminus\{x_0\})$ such that \eqref{thm:optimal_embedding_Lambda_spaces_not_SS:eq:2}--\eqref{thm:optimal_embedding_Lambda_spaces_not_SS:eq:41} hold with $f_1$, $u_1$, and $B_1$ replaced by $f_2$, $u_2$, and $B_2$, respectively. 

We proceed inductively. More specifically, assume that for some $M\in\N$ we already have $\{a_j\}_{j = 1}^{M+1}$, $\{B_j\}_{j = 1}^{M+1}$, $\{f_j\}_{j=1}^M$, and $\{u_j\}_{j = 1}^M$ such that
\begin{align}
f_j &= c_j\chi_{(a_j/2,a_j)}\in\I_{a_j/2,a_j}, \label{thm:optimal_embedding_Lambda_spaces_not_SS:eq:8}\\
\|f_j\|_{\Lambda^q_w(0,1)} = c_j W\Big( \frac{a_j}{2} \Big)^\frac1{q} &= 1, \label{thm:optimal_embedding_Lambda_spaces_not_SS:eq:27}\\
u_j &= u_{f_j} \in \VmXOm\cap \mathcal{C}^{m-1}(\Omega\setminus\{x_0\}),\label{thm:optimal_embedding_Lambda_spaces_not_SS:eq:9}\\
\text{the support of $u_j$ is } &\text{the closed ball $\bar{B}_j$}, \label{thm:optimal_embedding_Lambda_spaces_not_SS:eq:13}\\
\|u_j\|_{\VmLamOm} &\leq C_1 \|f_j\|_{\Lambda^q_w(0,1)}, \label{thm:optimal_embedding_Lambda_spaces_not_SS:eq:25}\\
C_2 &\leq \|u_j\|_{\Lambda^q_{w_{opt}}(\Omega)} < \infty, \label{thm:optimal_embedding_Lambda_spaces_not_SS:eq:40}\\
\int_{|B_{j+1}|}^{|B_j|} (u_j\chi_{B_j\setminus B_{j+1}})^*(t)^q w_{opt}(t) \dd{t} &\geq \Big( \frac{C_2}{1 + \varepsilon_1} \Big)^q, \label{thm:optimal_embedding_Lambda_spaces_not_SS:eq:19}\\
\bar{B}_{j+1} &\subsetneq B_j, \label{thm:optimal_embedding_Lambda_spaces_not_SS:eq:10}\\
0 < a_{j+1} &< \frac{a_j}{4}, \label{thm:optimal_embedding_Lambda_spaces_not_SS:eq:11}\\
\|u_j\chi_{B_{j+1}}\|_{\Lambda^q_{w_{opt}}(\Omega)} &\leq \eta_j, \label{thm:optimal_embedding_Lambda_spaces_not_SS:eq:14}\\
(1 + \varepsilon_1)\|u_j\chi_{B_j\setminus B_{j+1}}\|_{\Lambda^q_{w_{opt}}(\Omega)} &> C_2, \label{thm:optimal_embedding_Lambda_spaces_not_SS:eq:15}\\
c_j^q W(a_{j+1}) &< \frac1{4^j}, \label{thm:optimal_embedding_Lambda_spaces_not_SS:eq:16}
\end{align}
for every $j=1,\dots, M$. Repeating the same argument as in the first step, we find functions $f_{M+1}=\alpha_{M+1}\chi_{(a_{M+1}/2,a_{M+1})}\in\I_{a_{M+1}/2,a_{M+1}}$, $u_{M+1}=u_{f_{M+1}}\in \VmXOm \cap \mathcal C^{m-1}(\Omega\setminus\{x_0\})$, a number $a_{M+2}$, and a ball $B_{M+2} = B(x_0, a_{M+2}^{1/n}R)$ such that \eqref{thm:optimal_embedding_Lambda_spaces_not_SS:eq:8}--\eqref{thm:optimal_embedding_Lambda_spaces_not_SS:eq:16} hold with $j$ replaced by $M+1$.

Now, it follows from \eqref{thm:optimal_embedding_Lambda_spaces_not_SS:eq:13} combined with \eqref{thm:optimal_embedding_Lambda_spaces_not_SS:eq:10} that the sequence $\{u_j\}_{j=1}^\infty$ is linear independent. We need to show that both \eqref{thm:optimal_embedding_Lambda_spaces_not_SS:optimal_norm_E} and \eqref{thm:optimal_embedding_Lambda_spaces_not_SS:gradient_norm_E} are satisfied. We start with the former. Let $\{\alpha_j\}_{j = 1}^\infty\in\ell_q$. For each $j\in\N$, set
\begin{equation*}
\tilde{u}_j = u_j \chi_{B_j\setminus B_{j+1}}.
\end{equation*}
Note that $\sum_{j=1}^\infty \alpha_j u_j$ is a well-defined measurable function because the sum is locally finite. Since the functions $\tilde{u}_j$ have mutually disjoint supports (recall \eqref{thm:optimal_embedding_Lambda_spaces_not_SS:eq:13} and \eqref{thm:optimal_embedding_Lambda_spaces_not_SS:eq:10}), we have
\begin{equation}\label{thm:optimal_embedding_Lambda_spaces_not_SS:eq:20}
\Big( \sum_{j=1}^\infty \alpha_j \tilde{u}_j \Big)^* \geq \sum_{j=1}^\infty |\alpha_j| \tilde{u}_j^*\chi_{(|B_{j+1}|, |B_j|)}.
\end{equation}
Clearly,
\begin{align}
\Big\| \sum_{j=1}^\infty \alpha_j u_j \Big\|_{\Lambda^q_{w_{opt}}(\Omega)} &\geq \Big\| \sum_{j=1}^\infty \alpha_j \tilde{u}_j \Big\|_{\Lambda^q_{w_{opt}}(\Omega)} - \Big\| \sum_{j=1}^\infty \alpha_j (u_j-\tilde{u}_j) \Big\|_{\Lambda^q_{w_{opt}}(\Omega)} \nonumber\\
&= \Big\| \sum_{j=1}^\infty \alpha_j \tilde{u}_j \Big\|_{\Lambda^q_{w_{opt}}(\Omega)} - \Big\| \sum_{j=1}^\infty \alpha_j u_j\chi_{B_{j+1}} \Big\|_{\Lambda^q_{w_{opt}}(\Omega)}. \label{thm:optimal_embedding_Lambda_spaces_not_SS:eq:21}
\end{align}
Moreover, note that the support of $\sum_{j=1}^\infty \alpha_j u_j$ is in the closed ball $\bar{B}_1$ thanks to \eqref{thm:optimal_embedding_Lambda_spaces_not_SS:eq:13} and \eqref{thm:optimal_embedding_Lambda_spaces_not_SS:eq:10}. As for the first term in \eqref{thm:optimal_embedding_Lambda_spaces_not_SS:eq:21}, using \eqref{thm:optimal_embedding_Lambda_spaces_not_SS:eq:20} and \eqref{thm:optimal_embedding_Lambda_spaces_not_SS:eq:19}, we have
\begin{align}
\Big\| \sum_{j=1}^\infty \alpha_j \tilde{u}_j \Big\|_{\Lambda^q_{w_{opt}}(\Omega)}^q &= \int_0^{|B_1|} \Big( \sum_{j=1}^\infty \alpha_j \tilde{u}_j \Big)^*(t)^q w_{opt}(t) \dd{t} \nonumber\\
&\geq\sum_{j=1}^\infty |\alpha_j|^q \int_{|B_{j+1}|}^{|B_j|} \tilde{u}_j^*(t)^q w_{opt}(t) \dd{t} \nonumber\\
&\geq\frac{C_2^q}{(1+\varepsilon_1)^q} \sum_{j=1}^\infty |\alpha_j|^q. \label{thm:optimal_embedding_Lambda_spaces_not_SS:eq:22}
\end{align}
As for the second term in \eqref{thm:optimal_embedding_Lambda_spaces_not_SS:eq:21}, using \eqref{thm:optimal_embedding_Lambda_spaces_not_SS:eq:14}, the H\"older inequality, and \eqref{thm:optimal_embedding_Lambda_spaces_not_SS:eq:23}, we see that
\begin{align}
\Big\| \sum_{j=1}^\infty \alpha_j u_j\chi_{B_{j+1}} \Big\|_{\Lambda^q_{w_{opt}}(\Omega)} &\leq \sum_{j=1}^\infty |\alpha_j| \|u_j\chi_{B_{j+1}}\|_{\Lambda^q_{w_{opt}}(\Omega)} \nonumber\\
&\leq \|\{\alpha_j\}_{j = 1}^\infty\|_{\ell_q} \|\{\eta_j\}_{j = 1}^\infty\|_{\ell_{q'}} \nonumber\\
&\leq \varepsilon_2 \|\{\alpha_j\}_{j = 1}^\infty\|_{\ell_q}. \label{thm:optimal_embedding_Lambda_spaces_not_SS:eq:24}
\end{align}
Finally, combining \eqref{thm:optimal_embedding_Lambda_spaces_not_SS:eq:21} with \eqref{thm:optimal_embedding_Lambda_spaces_not_SS:eq:22} and \eqref{thm:optimal_embedding_Lambda_spaces_not_SS:eq:24}, we obtain \eqref{thm:optimal_embedding_Lambda_spaces_not_SS:optimal_norm_E}.

It remains for us to show that \eqref{thm:optimal_embedding_Lambda_spaces_not_SS:gradient_norm_E} is valid. Let $N\in\N$ and $\{\alpha_j\}_{j=1}^N\subseteq\R$. Clearly, it is sufficient to show \eqref{thm:optimal_embedding_Lambda_spaces_not_SS:gradient_norm_E} for $\{\alpha_j\}_{j=1}^N\subseteq\R$ such that
\begin{equation}\label{thm:optimal_embedding_Lambda_spaces_not_SS:eq:28}
\sum_{j=1}^N |\alpha_j|^q = 1.
\end{equation}
Thanks to \eqref{thm:optimal_embedding_Lambda_spaces_not_SS:eq:9} and \eqref{thm:small_support_big_norm:E:controlled_Sobolev_norm_of_linear_comb}, we have
\begin{equation*}
\Big\| \sum_{j=1}^N \alpha_j u_j \Big\|_{\VmLamOm} \leq C_1 \Big\| \sum_{j=1}^N \alpha_j f_j \Big\|_{\Lambda^q_w(0,1)}.
\end{equation*}
Recall that $f_j = c_j\chi_{(a_j/2,a_j)}$ for some $c_j>0$ (see~\eqref{thm:optimal_embedding_Lambda_spaces_not_SS:eq:8}). Note that it follows from \eqref{thm:optimal_embedding_Lambda_spaces_not_SS:eq:11} that the sequence $\{a_j\}_{j = 1}^\infty$ is decreasing and the intervals $\{(a_j/2,a_j)\}_{j = 1}^\infty$ are mutually disjoint. Furthermore, since the sequence $\{a_j\}_{j = 1}^\infty$ is decreasing, it follows from \eqref{thm:optimal_embedding_Lambda_spaces_not_SS:eq:27} that the sequence $\{c_j\}_{j=1}^\infty$ is nondecreasing. Note that
\begin{equation}\label{thm:optimal_embedding_Lambda_spaces_not_SS:eq:29}
\sum_{k=l}^\infty \frac{a_k}{2} \leq \frac{a_l}{2} \sum_{k = l}^\infty 4^{l - k} = \frac{2}{3}a_l <  a_l  \quad \text{for every $l\in\N$}
\end{equation}
thanks to \eqref{thm:optimal_embedding_Lambda_spaces_not_SS:eq:11}. Furthermore, note that the support of $\sum_{j=1}^N \alpha_j f_j$ is in the interval $(0,a_1]$. Hence, using \eqref{thm:optimal_embedding_Lambda_spaces_not_SS:eq:29} with $l = 1$ and \eqref{prel:pointwise_inequality_sum_of_rearrangements}, we obtain
\begin{align}
\Big\| \sum_{j=1}^N \alpha_j f_j \Big\|_{\Lambda^q_w(0,1)}^q &= \int_0^{a_1} \Big( \sum_{j=1}^N \alpha_j f_j \Big)^*(t)^q w(t) \dd{t} \nonumber\\
&= \int_0^{2a_1} \Big( \sum_{j=1}^N \alpha_j f_j \Big)^*(t)^q w(t) \dd{t} \nonumber\\
&= \sum_{k=1}^\infty \int_{2a_{k+1}}^{2a_k} \Big( \sum_{j=1}^N \alpha_j f_j \Big)^*(t)^q w(t) \dd{t} \nonumber\\
&= \sum_{k=1}^N \int_{2a_{k+1}}^{2a_k} \Big( \sum_{j=1}^N \alpha_j f_j \Big)^*(t)^q w(t) \dd{t} \nonumber\\
&\quad+ \sum_{k=N+1}^\infty \int_{2a_{k+1}}^{2a_k} \Big( \sum_{j=1}^N \alpha_j f_j \Big)^*(t)^q w(t) \dd{t}\nonumber\\
&\leq \sum_{k=1}^N \int_{2a_{k+1}}^{2a_k} \Big( \sum_{j=1}^{k} \alpha_j f_j \Big)^*(t/2)^q w(t) \dd{t} \nonumber\\
&\quad+ \sum_{k=1}^{N-1} \int_{2a_{k+1}}^{2a_k} \Big( \sum_{j=k+1}^N \alpha_j f_j \Big)^*(t/2)^q w(t) \dd{t} \nonumber\\
&\quad+ \sum_{k=N+1}^\infty \int_{2a_{k+1}}^{2a_k} \Big( \sum_{j=1}^N \alpha_j f_j \Big)^*(t)^q w(t) \dd{t} \label{thm:optimal_embedding_Lambda_spaces_not_SS:eq:31}.
\end{align}

Now, as for the first term, note that
\begin{equation*}
\Big| \sum_{j=1}^k \alpha_j f_j \Big| = \sum_{j=1}^k |\alpha_j| f_j \leq |\alpha_k| f_k + \sum_{j=1}^{k-1} f_j \quad \text{for every $k=1,\dots, N$}
\end{equation*}
thanks to \eqref{thm:optimal_embedding_Lambda_spaces_not_SS:eq:28}. Using this, \eqref{prel:pointwise_inequality_sum_of_rearrangements}, and \eqref{thm:optimal_embedding_Lambda_spaces_not_SS:eq:8}, we obtain
\begin{align}
\sum_{k=1}^N \int_{2a_{k+1}}^{2a_k} \Big( \sum_{j=1}^{k} \alpha_j f_j \Big)^*(t/2)^q w(t) \dd{t} &\leq \sum_{k=1}^N \int_{2a_{k+1}}^{2a_k} \Big( |\alpha_k| f_k + \sum_{j=1}^{k-1} f_j \Big)^*(t/2)^q w(t) \dd{t} \nonumber\\
&\begin{aligned}\leq 2^{q-1} \sum_{k=1}^N \Bigg( &|\alpha_k|^q |c_k|^q \int_{2a_{k+1}}^{2a_k} \chi_{(0, a_k/2)}(t/4) w(t) \dd{t}\\&+ \int_{2a_{k+1}}^{2a_k} \Big( \sum_{j=1}^{k-1} f_j \Big)^*(t/4)^q w(t) \dd{t} \Bigg)\end{aligned} \nonumber\\
&\begin{aligned}\leq 2^{q-1} \sum_{k=1}^N\Bigg( &|\alpha_k|^q c_k^q W(2a_k) \\+ &\int_{2a_{k+1}}^{2a_k} \Big(\sum_{j=1}^{k-1} f_j \Big)^*(t/4)^q w(t) \dd{t} \Bigg).\end{aligned} \label{thm:optimal_embedding_Lambda_spaces_not_SS:eq:38}
\end{align}
Furthermore, we have
\begin{align}
\sum_{k=1}^N |\alpha_k|^q c_k^q W(2a_k) &\leq \Delta_2^2 \sum_{k=1}^N |\alpha_k|^q c_k^q W(a_k/2) \nonumber\\
&= \Delta_2^2 \sum_{k=1}^N |\alpha_k|^q \label{thm:optimal_embedding_Lambda_spaces_not_SS:eq:32}
\end{align}
thanks to \eqref{thm:optimal_embedding_Lambda_spaces_not_SS:eq:30} and \eqref{thm:optimal_embedding_Lambda_spaces_not_SS:eq:27}. Using the disjointness of the intervals $\{(a_j/2,a_j)\}_{j=1}^\infty$ and the monotonicity of $\{c_j\}_{j=1}^\infty$, it is easy to see that
\begin{equation}\label{thm:optimal_embedding_Lambda_spaces_not_SS:eq:34}
\Big( \sum_{j=i}^M f_j \Big)^* = \sum_{j=i}^M c_j \chi_{\big( \sum_{l=j+1}^M a_l/2, \sum_{l=j}^M a_l/2 \big)} \quad \text{for every $1\leq i \leq M \in\N$}.
\end{equation}
In particular, we have
\begin{equation}\label{thm:optimal_embedding_Lambda_spaces_not_SS:eq:43}
\Big( \sum_{j=i}^M f_j \Big)^* \leq c_M \quad \text{for every $1\leq i \leq M \in\N$}.
\end{equation}
Using \eqref{thm:optimal_embedding_Lambda_spaces_not_SS:eq:43}, \eqref{thm:optimal_embedding_Lambda_spaces_not_SS:eq:30}, and \eqref{thm:optimal_embedding_Lambda_spaces_not_SS:eq:16}, we obtain, for all $k\in\N$, $k\geq2$, and $\gamma>0$,
\begin{align}
\int_{2a_{k+1}}^{2a_k} \Big(\sum_{j=1}^{k-1} f_j \Big)^*(t/\gamma)^q w(t) \dd{t} &\leq \sum_{k=1}^N c_{k-1}^q W(2a_k) \nonumber\\
&\leq \Delta_2 c_{k-1}^q W(a_k) \nonumber\\
&\leq \frac{\Delta_2}{4^{k-1}}. \label{thm:optimal_embedding_Lambda_spaces_not_SS:eq:35}
\end{align}
Hence, combining \eqref{thm:optimal_embedding_Lambda_spaces_not_SS:eq:38}, \eqref{thm:optimal_embedding_Lambda_spaces_not_SS:eq:32}, and \eqref{thm:optimal_embedding_Lambda_spaces_not_SS:eq:35}, we arrive at
\begin{equation}\label{thm:optimal_embedding_Lambda_spaces_not_SS:eq:39}
\sum_{k=1}^N \int_{a_{k+1}}^{a_k} \Big( \sum_{j=1}^{k} \alpha_j f_j \Big)^*(t/2)^q w(t) \dd{t} \leq 2^{q-1}\Big( \Delta_2^2 \sum_{k=1}^N |\alpha_k|^q + \Delta_2 \sum_{k=2}^N \frac1{4^{k-1}} \Big).
\end{equation}

As for the second term in \eqref{thm:optimal_embedding_Lambda_spaces_not_SS:eq:31}, fix $k=1, \dots, N-1$. Using \eqref{thm:optimal_embedding_Lambda_spaces_not_SS:eq:28} and \eqref{thm:optimal_embedding_Lambda_spaces_not_SS:eq:34} together with the disjointness of the intervals $\{(a_j/2,a_j)\}_{j=1}^\infty$ once more, we obtain
\begin{align}
\int_{2a_{k+1}}^{2a_k} \Big( \sum_{j=k+1}^N \alpha_j f_j \Big)^*(t/2)^q w(t) \dd{t} &\leq \int_{2a_{k+1}}^{2a_k} \Big( \sum_{j=k+1}^N f_j \Big)^*(t/2)^q w(t) \dd{t} \nonumber\\
&\leq \int_{2a_{k+1}}^{2a_k} \sum_{j=k+1}^N c_j^q \chi_{\big( \sum_{l=j+1}^N a_l, \sum_{l=j}^N a_l \big)}(t) w(t) \dd{t} \nonumber\\
&\leq \sum_{j=k+1}^N c_j^q \int_{2a_{k+1}}^{2a_k}   \chi_{(0, \sum_{l=j}^N a_l)}(t) w(t) \dd{t}. \label{thm:optimal_embedding_Lambda_spaces_not_SS:eq:12}
\end{align}
Note that
\begin{equation}\label{thm:optimal_embedding_Lambda_spaces_not_SS:eq:42}
\sum_{l=j}^N a_l < 2a_j \leq 2a_{k+1} \quad \text{for every $k+1\leq j\leq N$}
\end{equation}
thanks to \eqref{thm:optimal_embedding_Lambda_spaces_not_SS:eq:29} and the monotonicity of $\{a_j\}_{j = 1}^\infty$. Therefore, in view of \eqref{thm:optimal_embedding_Lambda_spaces_not_SS:eq:12} and \eqref{thm:optimal_embedding_Lambda_spaces_not_SS:eq:42}, we have
\begin{equation*}
\int_{2a_{k+1}}^{2a_k} \Big( \sum_{j=k+1}^N \alpha_j f_j \Big)^*(t/2)^q w(t) \dd{t} = 0,
\end{equation*}
whence it follows that
\begin{equation}\label{thm:optimal_embedding_Lambda_spaces_not_SS:eq:36}
\sum_{k=1}^{N-1} \int_{2a_{k+1}}^{2a_k} \Big( \sum_{j=k+1}^N \alpha_j f_j \Big)^*(t/2)^q w(t) \dd{t} = 0.
\end{equation}
Finally, as for the third term in \eqref{thm:optimal_embedding_Lambda_spaces_not_SS:eq:31}, using \eqref{thm:optimal_embedding_Lambda_spaces_not_SS:eq:28} and \eqref{thm:optimal_embedding_Lambda_spaces_not_SS:eq:35}, we obtain
\begin{align}
\sum_{k=N+1}^\infty \int_{2a_{k+1}}^{2a_k} \Big( \sum_{j=1}^N \alpha_j f_j \Big)^*(t)^q w(t) \dd{t} &\leq \sum_{k=N+1}^\infty \int_{2a_{k+1}}^{2a_k} \Big( \sum_{j=1}^N f_j \Big)^*(t)^q w(t) \dd{t} \nonumber\\
&\leq \Delta_2 \sum_{k=N+1}^\infty 4^{-k + 1}. \label{thm:optimal_embedding_Lambda_spaces_not_SS:eq:37}
\end{align}

At last, combining \eqref{thm:optimal_embedding_Lambda_spaces_not_SS:eq:39}, \eqref{thm:optimal_embedding_Lambda_spaces_not_SS:eq:36}, and \eqref{thm:optimal_embedding_Lambda_spaces_not_SS:eq:37} with \eqref{thm:optimal_embedding_Lambda_spaces_not_SS:eq:31}, we obtain
\begin{align*}
\Big\| \sum_{j=1}^N \alpha_j f_j \Big\|_{\Lambda^q_w(\Omega)}^q &\leq 2^{q-1} \Big(\Delta_2^2 \sum_{k=1}^N |\alpha_k|^q + \Delta_2\sum_{k=2}^\infty 4^{-k + 1} \Big) \\
&\leq 2^{q-1}\Delta_2^2 \Big(1 + \sum_{k=1}^N |\alpha_k|^q \Big).
\end{align*}
In view of \eqref{thm:optimal_embedding_Lambda_spaces_not_SS:eq:28}, this is \eqref{thm:optimal_embedding_Lambda_spaces_not_SS:gradient_norm_E} with $C_3 = 2\Delta_2^{2/q}$, which concludes the proof.
\end{proof}

\section*{Acknowledgment}
The authors would like to thank the referee for carefully reading the paper and their valuable comments.

\end{document}